\providecommand{\U}[1]{\protect\rule{.1in}{.1in}}
\newtheorem{theorem}{Theorem}
\newtheorem{conjecture}[theorem]{Conjecture}
\newtheorem{corollary}[theorem]{Corollary}
\newtheorem{definition}[theorem]{Definition}
\newtheorem{lemma}[theorem]{Lemma}
\newtheorem{proposition}[theorem]{Proposition}
\newtheorem{remark}[theorem]{Remark}
\newenvironment{proof}[1][Proof]{\noindent\textbf{#1.} }{\ \rule{0.5em}{0.5em}}
\def\Z{\mathbb{Z}}
\def\Q{\mathbb{Q}}
\newcommand{\ds}{\displaystyle}
\newcommand{\ol}{\overline}
\newcommand{\on}{\operatorname}
\begin{document}

\title{\textbf{Computing relative power integral bases in a family of quartic
extensions of imaginary quadratic fields}}
\author{\textsc{Zrinka Franu\v{s}i\'{c} and Borka Jadrijevi\'{c}}}
\date{}
\maketitle

\begin{abstract}
Let $M$ be an imaginary quadratic field with the ring of integers $\mathbb{Z}_{M}$
and let $\xi$ be a root of polynomial%
\[
f\left(  x\right)  =x^{4}-2cx^{3}+2x^{2}+2cx+1,
\]
where $c\in\mathbb{Z}_{M},$ $c\notin\left\{  0,\pm2\right\}  .$ We consider an 
infinite family of octic fields $K_{c}=M\left(  \xi\right)$ with the ring of
integers $\mathbb{Z}_{K_{c}}.$ Our goal is to determine all generators of
relative power integral basis of $\mathcal{O=}\mathbb{Z}_{M}\left[
\xi\right]  $ over $\mathbb{Z}_{M}.$ We show that our problem reduces to
solving the system of relative Pellian equations%
\[
cV^{2}-\left(  c+2\right)  U^{2}=-2\mu\text{, \ \ }cZ^{2}-\left(  c-2\right)
U^{2}=2\mu\text{, }%
\]
where $\mu$ is an unit in $\mathbb{Z}_{M}$. We solve the system completely and
find that all non-equivalent generators of power integral basis of
$\mathcal{O}$ over $\mathbb{Z}_{M}$ are given by $\alpha=\xi,$ $2\xi-2c\xi
^{2}+\xi^{3}$ for $\left\vert c\right\vert \geq159108$ and $|c|\leq200$.

\end{abstract}

\footnotetext{\textit{2000 Mathematics Subject Classification.} Primary:
11D57, 11A55, 11J86; Secondary: 11J68, 11Y50.
\par
\textit{Key words.} index form equations, relative power integral basis,
system of relative Pellian equations
\par
The authors where supported by the Croatian Science Foundation under the
project no. 6422.}

\section{Introduction}

\noindent Let $K$ be an algebraic number field of degree $n$ and
$\mathbb{Z}_{K}$ its ring of integers. It is a classical problem in algebraic
number theory to decide if $K$ is \textit{monogenic field}, or, equivalently,
if $K$ is a field for which there exist an element $\alpha\in\mathbb{Z}_{K}$
such that ring of integers $\mathbb{Z}_{K}$ is of the form $\mathbb{Z}%
_{K}=\mathbb{Z}\left[  \alpha\right]  $. The powers of such element $\alpha$
constitute a \textit{power integral basis}, ie. an integral basis of the form
$\left\{  1,\alpha,\alpha^{2},...,\alpha^{n-1}\right\}  .$ In general, if
$\left\{  1,\omega_{2},...,\omega_{n}\right\}  $ is an integral basis of $K$
and the primitive integer $\alpha\in\mathbb{Z}_{K}$ ($K=\mathbb{Q} (\alpha)$)
is represented in that integral basis as $\alpha=x_{1}+x_{2}\omega
_{2}+...+x_{n}\omega_{n}$, then%
\[
I\left(  \alpha\right)  =\left[  \mathbb{Z}_{K}^{+}:\mathbb{Z}\left[
\alpha\right]  ^{+}\right]  =\left\vert I\left(  x_{2},...,x_{n}\right)
\right\vert
\]
where $\mathbb{Z}_{K}^{+}\ $and $\mathbb{Z}\left[  \alpha\right]  ^{+}$
respectively denote the additive groups of the ring $\mathbb{Z}_{K}$ and the
polynomial ring $\mathbb{Z}\left[  \alpha\right] $. The polynomial $I\left(
X_{2},...,X_{n}\right)  $ is a homogenous polynomial in $n-1$ variables of
degree $\frac{n\left(  n-1\right) }{2}$ with rational integer coefficients
which is called the \textit{index form }corresponding to the integral basis
$\left\{  1,\omega_{2},...,\omega_{n}\right\}  $. The positive rational
integer $I\left(  \alpha\right)  $ is called an \textit{index} of the element
$\alpha$ and it does not depend on $x_{1}.$ Therefore, the primitive integer
$\alpha$ generates a power integral basis if and only if $I\left(
\alpha\right)  =1.$ Consequently, the number field $K$ is monogenic if and
only if the \textit{index form equation}
\begin{equation}
I\left(  x_{2},...,x_{n}\right)  =\pm1 \label{if1}%
\end{equation}
is solvable in rational integers. The problem of determining all generators of
the power integral basis reduces to the resolution of diophantine eq.
(\ref{if1}).

The index form equations are mostly a very complicated diophantine equations
for number fields of large degree $n$ because of high degree $\frac{n\left(
n-1\right) }{ 2}$ of index form and its number of variables $n-1$. In some
particular fields, by studying the structure of index form, it has been found
a correspondence between the index form equation and simpler types of
equations (for a survey see \cite{Gaal}). For example, in \cite{GPP1, GPP2},
I. Ga\'{a}l, A. Peth\H{o} and M. Pohst showed that a resolution of index form
equations in any quartic field can be reduced to the resolution of cubic and
several corresponding Thue equations. In \cite{GP1}, I. Ga\'{a}l, and M. Pohst
extended some basic ideas and developed a method of determined generators of a
power integral basis to relative quartic extension fields $K$ over base fields
$M$. The method is much more complicated than in the absolute case. For
example, instead of Thue equations we obtain relative Thue equations over a
subfield $M$. The generalization of known methods to relative extensions leads
to various nontrivial problems. Those problems occur primarily because a
relative integral basis does not have to exist and also, the ring of integers
of a base field $M$ is not necessarily a unique factorization domain.

Algorithms for solving index form equations have been applied in several
infinite parametric families of certain fields. In particular, I. Ga\'{a}l and
T. Szab\'{o} in \cite{GS1} considered three infinite parametric families of
octic fields that are quartic extensions of imaginary quadratic fields. By
applying the method described in \cite{GP1} and by using some results on
infinite parametric families on relative Thue equations given in \cite{ZIE}
and \cite{JZ}, they found all non-equivalent generators of relative power
integral basis for infinite values of parameter.

In this paper, we consider the following problem. Let $M$ be an imaginary
quadratic field with the ring of integers $\mathbb{Z}_{M}$ and let $\xi$ be a
root of polynomial
\[
f\left(  x\right)  =x^{4}-2cx^{3}+2x^{2}+2cx+1,
\]
where $c\in\mathbb{Z}_{M},$ $c\notin\left\{  0,\pm2\right\}  .$ We consider
infinite family of octic fields $K_{c}=M\left(  \xi\right)  \ $with ring of
integers $\mathbb{Z}_{K_{c}}.$ Since integral basis of $K_{c}$ is not known in
a parametric form, our goal is to determine all generators of relative power
integral basis of $\mathcal{O=}\mathbb{Z}_{M}\left[  \xi\right]  $ over
$\mathbb{Z}_{M}\ $(instead of $\mathbb{Z}_{K_{c}}$ over $\mathbb{Z}_{M}$)$.$

The paper is organized as follows. In Sections \ref{s.PRe} and \ref{sec:3}, we
briefly describe the method of I. Ga\'{a}l, and M. Pohst given in \cite{GP1}
and apply that method to the problem described above. In Section \ref{dio.311}
we show that our problem reduces to solving the system of relative Pellian
equations over $M$ and we prove some results about that system. In Section
\ref{sec:3 CMiBT}, by combining \emph{ congruence method} with an extension of
Bennett's theorem given in \cite{JZ}, we solve the system completely and find
all non-equivalent generators of power integral basis of $\mathcal{O}$ over
$\mathbb{Z}_{M}$ if absolute value of parameter $c$ is large enough
($\left\vert c\right\vert \geq159108$). In the Section \ref{sec:BT}, for
$\left\vert c\right\vert <159108$ we use a theorem of Baker and W\"{u}stholz
and a version of the reduction procedure due to Baker and Davenport. Without
proving that the corresponding linear form $\Lambda\not =0$, we cannot apply
Baker's theory. The proof is rather complicated and involves several cases. We
were not able to perform reduction procedure for all values of $\left\vert
c\right\vert <159108$ because we estimated that it would take $\sim10^{13}$
sec. (in Mathematica). So, we have performed reduction procedure for
$\left\vert c\right\vert \leq200$. In Section \ref{s.Sc} we observe
exceptional cases $c\in S_{{\small c}}.$ For $c\in S_{{\small c}}$ at least
one of the equation of our system of Pellian equations has additional classes
of solutions or there exists only finitely many solutions of those equations.
In Section \ref{s.Sc} we examine whether the order $\mathcal{O=}\mathbb{Z}%
_{M}\left[  \xi\right]  $ admits an absolute power integral basis.

Our main result is the following theorem.

\begin{theorem}
\label{tm:main} Assume that $D$ is a square free positive integer,
$M=\mathbb{Q}\left(  \sqrt{-D}\right)  $ is an imaginary quadratic with ring
of integers $\mathbb{Z}_{M}$, $\xi$ is a root of the polynomial
\[
f\left(  t\right)  =t^{4}-2ct^{3}+2t^{2}+2ct+1,\text{ }%
\]
where $c\in\mathbb{Z}_{M},$ $c\notin\left\{  0,\pm2\right\}  $ and
$K_{c}=M\left(  \xi\right)  $ is an octic field with ring of integers
$\mathbb{Z}_{K_{c}}$. Then all non-equivalent generators of power integral
basis of $\mathcal{O=}\mathbb{Z}_{M}\left[  \xi\right]  $ over $\mathbb{Z}%
_{M}$ are given by
\begin{equation}
\alpha=\xi,\text{ \ }2\xi-2c\xi^{2}+\xi^{3} \label{gu}%
\end{equation}
in each of the following cases:

\begin{description}
\item[i)] for all $D$ and $\left\vert c\right\vert \geq159108;$

\item[ii)] for all $D$, $c\notin S_{c}\ $and $\left\vert c\right\vert
\leq200\ $or $\operatorname{Re}(c)=0;$

\item[iii)] $c=\pm1\ $and $D=1,3,$
\end{description}

where {\small
\begin{equation}
\label{SC1}S_{c}=\{\pm1,\pm\sqrt{-1},\pm1\pm\sqrt{-1},\pm2\pm\sqrt{-1},
\pm1\pm\sqrt{-2},\pm1\pm\sqrt{-3},\frac{\pm1\pm\sqrt{-3}}{2} ,\frac{\pm
3\pm\sqrt{-3}}{2}\},
\end{equation}
} with mixed signs.
\end{theorem}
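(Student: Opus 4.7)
The plan is to execute the Gaál--Pohst strategy outlined in Sections \ref{s.PRe}--\ref{dio.311}: first expand the relative index form of $\mathcal{O}=\mathbb{Z}_{M}[\xi]$ over $\mathbb{Z}_{M}$ attached to the basis $\{1,\xi,\xi^{2},\xi^{3}\}$, factor out the cubic resolvent, and reduce the equation $I(\alpha)=\mu$ (for $\mu\in\mathbb{Z}_{M}^{\times}$) to the system of relative Pellian equations
\[
cV^{2}-(c+2)U^{2}=-2\mu,\qquad cZ^{2}-(c-2)U^{2}=2\mu,
\]
recorded in the abstract. A solution triple $(U,V,Z)$ of this system will correspond, via the standard back-substitution, to a generator $\alpha$ of the relative power integral basis, and the trivial solutions yielding $\alpha=\xi$ and $\alpha=2\xi-2c\xi^{2}+\xi^{3}$ will be visible directly from the shape of $f$. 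The bulk of the argument is therefore to prove that these are the \emph{only} solutions up to equivalence.

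For the generic range $|c|\geq 159108$, I would carry out the congruence method developed for relative Pellian systems: study the two equations modulo a small power of $c$ to extract tight congruence conditions on $U,V,Z$, and then combine these with the extension of Bennett's theorem on simultaneous approximation proved in \cite{JZ}. The Bennett-type bound will force any nontrivial solution to be so small that it must already lie among the trivial ones, establishing part \textbf{i)}. This step will rely on careful control of the units $\mu\in\mathbb{Z}_{M}^{\times}$, noting that for $D\neq 1,3$ one has $\mu=\pm 1$, while for $D=1,3$ extra unit classes have to be inspected; part \textbf{iii)} handles the small-parameter exceptional occurrences that arise precisely from these additional units.

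For $|c|<159108$, Bennett's extension is no longer strong enough and I would switch to Baker's theory: associate to any hypothetical extra solution a linear form $\Lambda$ in three logarithms of algebraic numbers built from the fundamental solutions of the two Pellian equations. Before applying Baker--W\"ustholz one must verify that $\Lambda\neq 0$, and this is where the main obstacle lies: one has to split into several cases according to the sign structure of $c$, the choice of unit $\mu$, and possible collapses of the logarithms; each case has to be ruled out by a separate algebraic argument showing that $\Lambda=0$ would force $c\in S_{c}$. Once $\Lambda\neq 0$ is secured, Baker--W\"ustholz produces an explicit upper bound for the solution index $n$, and a Baker--Davenport-style reduction based on continued fractions (carried out in \texttt{Mathematica}) collapses it to a manageable range.

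The reduction step is computationally the expensive one, and since the full sweep over $|c|<159108$ was estimated to cost $\sim 10^{13}$ seconds, I would run it only for $|c|\leq 200$ and, separately, for $\operatorname{Re}(c)=0$ (where the structure of the Pellian system simplifies), obtaining part \textbf{ii)}. The remaining parameters $c\in S_{c}$ listed in (\ref{SC1}) are the exceptional cases in which one of the Pellian equations either acquires additional solution classes or has only finitely many solutions; I would treat them one by one in Section \ref{s.Sc}, either by direct enumeration using the known structure of the unit group of $M$, or by showing that the extra solutions do not yield new inequivalent generators. Combining the three regimes yields the theorem. The hardest single step is the nonvanishing of $\Lambda$: it requires disentangling genuine algebraic coincidences between fundamental solutions in possibly different imaginary quadratic fields from spurious ones, and it is the source of the case analysis that dominates Section \ref{sec:BT}.
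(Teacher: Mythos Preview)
Your overall plan matches the paper's architecture closely: the Ga\'al--Pohst reduction to the cubic resolvent, the passage to the simultaneous Pellian system, the congruence method combined with the Bennett-type bound from \cite{JZ} for $|c|\ge 159108$, and Baker--W\"ustholz plus Baker--Davenport reduction for small $|c|$ are exactly what the paper does.

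There is, however, one concrete gap in your plan, concerning the $\operatorname{Re}(c)=0$ clause of part \textbf{ii)}. You write that the case analysis for $\Lambda\neq 0$ should end by ``showing that $\Lambda=0$ would force $c\in S_c$'', and then that you would ``run [the reduction] \ldots\ separately for $\operatorname{Re}(c)=0$''. In fact the paper proves (Proposition~\ref{PropL0}\,i)) that for $c\notin S_c$ one has $\Lambda=0$ \emph{if and only if} $\operatorname{Re}(c)=0$. So along the entire imaginary axis $\Lambda$ vanishes identically, Baker--W\"ustholz gives no information, and no Baker--Davenport reduction can be performed there. The paper handles this case by a completely different, direct argument (Proposition~\ref{PropL0}\,ii)): when $c=vi$ one has $\beta=\overline{\alpha}$, one writes $P/\sqrt{c}=a+b\alpha$, $Q/\sqrt{c}=d+e\beta$ with $a=\pm d$, and then shows by an explicit real/imaginary-part computation that the relation $u_m=\pm u'_n$ with $m,n>0$ forces $b\alpha=\pm\overline{b}\,\overline{\alpha}$, which is impossible. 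This algebraic argument, not Baker's theory, is what delivers the $\operatorname{Re}(c)=0$ part of \textbf{ii)}; your outline needs to incorporate it (or something equivalent) explicitly.

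Aside from this point, your sketch of part \textbf{iii)} via the extra units for $D=1,3$ and the treatment of $c\in S_c$ by direct enumeration is in line with Section~\ref{s.Sc}.
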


\begin{proof}
[Proof of Theorem \ref{tm:main}]Immediately from propositions \ref{PropCjv},
\ref{PropL0} ii), \ref{PropRP} and \ref{PropC1}.
\end{proof}

\begin{conjecture}
All non-equivalent generators of power integral basis of $\mathcal{O=}%
\mathbb{Z}_{M}\left[  \xi\right]  $ over $\mathbb{Z}_{M}$ are given by
(\ref{gu}) for all $D$ and $c\notin S_{c}$.
\end{conjecture}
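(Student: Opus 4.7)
The conjecture strengthens Theorem \ref{tm:main} by removing the quantitative restrictions $|c|\ge 159108$, $|c|\le 200$ and $\operatorname{Re}(c)=0$. What remains is therefore the intermediate regime
\[
R := \{\,c\in\mathbb{Z}_M\setminus S_c : 200 < |c| < 159108,\ \operatorname{Re}(c)\neq 0\,\}.
\]
All reductions of Sections \ref{s.PRe}--\ref{dio.311} apply uniformly in $c$, so it is enough to show that for every $c\in R$ the system
\[
cV^{2}-(c+2)U^{2}=-2\mu,\qquad cZ^{2}-(c-2)U^{2}=2\mu,\qquad \mu\in\mathbb{Z}_M^{\times},
\]
admits only the trivial classes of solutions already accounted for in (\ref{gu}). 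To each putative extra class one attaches a linear form in logarithms $\Lambda$; combining an upper bound $\log|\Lambda|\le -h\log|c|+O(1)$ (coming from the Pellian structure) with a Baker--W\"ustholz lower bound yields a finite, explicit height bound on solutions, which then has to be sharpened to the trivial solutions either analytically or by Baker--Davenport reduction.

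My plan is twofold. \textbf{First route: sharpen the hypergeometric threshold.} The bound $|c|\ge 159108$ of part i) descends from the extension of Bennett's theorem in \cite{JZ}, applied through the congruence method of Section \ref{sec:3 CMiBT}. That extension is phrased for rather general relative Thue coefficients, but here the coefficients have the rigid shape $c,\,c\pm 2$ with $\gcd_M(c,c\pm 2)\mid 2$, and $\mu$ ranges only over the tiny unit group $\mathbb{Z}_M^{\times}\subseteq\{\pm 1,\pm i,\zeta_6^{\pm k}\}$. I would track the proof of \cite{JZ} step by step with these specific data in hand, record every inequality that currently carries slack, and use the non-archimedean rigidity of $(c\pm2)/c$ together with the finiteness of $\mathbb{Z}_M^{\times}$ to tighten both the archimedean Pad\'e bound and the $2$-adic/$M$-adic content. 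The target is to replace $159108$ by some constant $\le 200$; if achieved, part ii) of Theorem \ref{tm:main} closes the conjecture immediately.

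\textbf{Second route: uniformising Baker--Davenport.} If the first route falls short, one must attack the computational barrier noted in Section \ref{sec:BT}. Running the reduction independently at each $c\in R$ is hopeless ($\sim 10^{13}$ seconds), but the LLL-reduced basis of the Baker lattice and the Baker--Davenport certificate depend algebraically on $c$. I would partition $R$ into finitely many parametric subfamilies of the form $\{c_0+t\omega:t\in\mathbb{Z}\}\cap R$, with $\omega$ a generator of $\mathbb{Z}_M$, and prove that within each such line the reduction certificate obtained at one value of $t$ persists throughout the whole family; this collapses the $\sim 10^{10}$-point sweep to finitely many reductions. Combined with the already available coverage for $|c|\le 200$ and $\operatorname{Re}(c)=0$, this would eliminate $R$.

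The hardest step is unquestionably the first. The constant $159108$ is the joint product of an archimedean Pad\'e remainder estimate and non-archimedean valuations of $\mu$, and any genuine improvement must preserve the delicate congruence-method interaction between the two. My concrete opening move is therefore a sensitivity analysis of the proof in \cite{JZ}: identify the single inequality most responsible for the size of the effective constant, attempt to tighten it using the specific coefficient shape, and only if this fails fall back on the parametric uniformisation of LLL reduction as a computational workaround.
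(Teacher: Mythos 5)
You have not given a proof, and none exists in the paper either: the statement you were handed is precisely the authors' \emph{Conjecture}, i.e.\ the open residue of Theorem \ref{tm:main}, and what you have written is a research programme whose two routes are both left entirely uncarried-out. You do identify the remaining region $R=\{c\in\mathbb{Z}_M\setminus S_c: 200<|c|<159108,\ \operatorname{Re}(c)\neq 0\}$ correctly, and you correctly describe the architecture of the paper (congruence method plus the Bennett-type Theorem \ref{t.borka} for large $|c|$; Baker--W\"ustholz plus Baker--Davenport reduction below). But neither route can be accepted even as a plausible sketch. Your first route --- tightening the threshold $159108$ down to $\le 200$ by tracking slack in \cite{JZ} --- fails \emph{structurally}, not just quantitatively. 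With $m=2$, $a_1=2$, $a_2=-2$, $T=c$ forced by the shape of the Pellian system, the paper computes $L=\frac{27}{4096}(|c|-2)^2$ and $P=1024(|c|+3)$, and the hypergeometric method yields any bound on $|U|$ only when $\lambda=1+\frac{\log P}{\log L}<2$, i.e.\ $P<L$; this forces $|c|\gtrsim 1.55\times 10^{5}$ (the paper checks $f(155352)>0$, $f(155351)<0$). For $|c|$ below that, $2-\lambda\le 0$ and the inequality $\frac{2C^{-1}}{\sqrt{|c|(|c|-2)}}>|U|^{2-\lambda}$ carries no information no matter how much you strengthen the congruence lower bound of Corollary \ref{c.16} or exploit the unit group and the gcd structure of $c$, $c\pm 2$: those enter only the lower bound for $|U|$ and the constant $C$, not the exponent. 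Since the $a_i=\pm 2$ are rigid, replacing $159108$ by $200$ would require a genuinely stronger simultaneous-approximation theorem, not a sensitivity analysis of \cite{JZ}.

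Your second route is the open problem restated rather than attacked. The Baker--Davenport certificate $\epsilon=\|\gamma q\|-M\|\theta q\|>0$ depends on the continued-fraction expansion of $\theta=\frac{\log\vartheta}{\log\eta}$, which varies erratically --- not algebraically --- along a line $c_0+t\omega$; the paper explicitly records that $\theta$ is extremely close to $1$ with enormous first convergents (e.g.\ $q=22788$ already for $c=1+66\sqrt{-2}$), which is exactly why the authors estimated $\sim 10^{13}$ seconds for the full sweep and stopped at $|c|\le 200$. Asserting that one reduction certificate ``persists throughout the whole family'' is an unproven claim, and there is no mechanism in your proposal that would establish it; the known parametric successes (\cite{ZIE}, \cite{JZ}) solve families asymptotically in the parameter, which is what part i) of Theorem \ref{tm:main} already does. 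So the verdict is: genuine gap --- no step of the conjecture is actually proved, and the one concrete quantitative claim (threshold $\le 200$ via route one) is provably unattainable by the stated means.
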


Also, we prove the following theorem.

\begin{theorem}
\label{tm:IG}If $-D\equiv2,3\;(\operatorname{mod}4)$ and all non-equivalent
generators of power integral basis of $\mathcal{O=}\mathbb{Z}_{M}\left[
\xi\right]  $ over $\mathbb{Z}_{M}$ are given by (\ref{gu}), then
$\mathcal{O}$ admits no power integral basis. In particular, in the cases
given in Theorem \ref{tm:main} $\mathcal{O}$ admits no power integral basis.
\end{theorem}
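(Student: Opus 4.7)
The plan is to derive a contradiction from any hypothetical equality $\mathcal{O}=\Z[\alpha]$ by analyzing the reduction modulo~$2$. Because $-D\equiv 2,3\pmod{4}$, we have $\Z_M=\Z[\sqrt{-D}]$, so as a ring
\[
\mathcal{O}\cong\Z[u,v]/(u^{2}+D,\,f(v))
\]
via $u\mapsto\sqrt{-D}$ and $v\mapsto\xi$. Modulo $2$ we have $u^{2}+D\equiv(u+\varepsilon)^{2}$ with $\varepsilon\in\{0,1\}$ the parity of $D$, and $f(v)\equiv v^{4}+1\equiv(v+1)^{4}\pmod{2}$, since every non-extreme coefficient of $f$ is even. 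Putting $a=u+\varepsilon$ and $b=v+1$ gives the presentation
\[
\mathcal{O}/2\mathcal{O}\;\cong\;R\,:=\,\F_{2}[a,b]/(a^{2},b^{4}),
\]
an $8$-dimensional local $\F_{2}$-algebra with maximal ideal $\mathfrak{m}=(a,b)$.

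Next I would prove that $R$ cannot be generated by a single element over $\F_{2}$. The monomials $a^{i}b^{j}$ with $0\le i\le 1$ and $0\le j\le 3$ form an $\F_{2}$-basis of $R$, and $\mathfrak{m}^{k}$ is spanned by those monomials with $i+j\ge k$; in particular $\mathfrak{m}^{5}=0$, so the nilpotency index of $\mathfrak{m}$ equals $5$. Any $\gamma\in R$ decomposes uniquely as $\gamma=c+\beta$ with $c\in\F_{2}$ the image in the residue field and $\beta\in\mathfrak{m}$; then $(\gamma-c)^{5}=\beta^{5}=0$, so $\gamma$ is a root of a monic polynomial of degree $5$ over $\F_{2}$, forcing
\[
\dim_{\F_{2}}\F_{2}[\gamma]\le 5\,<\,8\,=\,\dim_{\F_{2}}R.
\]

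To conclude, if $\mathcal{O}=\Z[\alpha]$ for some $\alpha\in\mathcal{O}$, then $\mathcal{O}/2\mathcal{O}=\Z[\alpha]/2\Z[\alpha]=\F_{2}[\bar{\alpha}]$ would be monogenic over $\F_{2}$, contradicting the preceding paragraph; hence $\mathcal{O}$ admits no absolute power integral basis, which establishes both assertions of the theorem. The crux of this plan is the structural identification of $\mathcal{O}/2\mathcal{O}$; the nilpotency calculation and the final application of reduction modulo~$2$ are routine. I remark that this argument in fact does not use the hypothesis that the relative power integral basis generators are exactly~(\ref{gu}) -- the mod-$2$ obstruction applies whenever $-D\equiv 2,3\pmod{4}$ and $c\in\Z_{M}\setminus\{0,\pm 2\}$, so the hypothesis in the theorem is invoked only to bridge to Theorem~\ref{tm:main}.
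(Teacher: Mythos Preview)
Your proof is correct and takes a genuinely different route from the paper's. The paper argues via the index factorization $I_{\mathcal{O}}(\alpha)=I_{\mathcal{O}/M}(\alpha)\cdot J(\alpha)$: it uses the hypothesis that every relative generator is equivalent to $\alpha_1=\xi$ or $\alpha_2=2\xi-2c\xi^{2}+\xi^{3}$, and for each $\alpha=A+\varepsilon\alpha_i$ computes the $16$-fold product defining $J(\alpha)$ symbolically in Maple (in the parameters $D,p,q,b$, where $c=p+q\sqrt{-D}$ and $A=a+b\sqrt{-D}$), finding that it is always divisible by $4096D^{2}$; after dividing by $\sqrt{|D_M|}^{\,4}=16D^{2}$ this forces $256\mid J(\alpha)$, hence $J(\alpha)\neq 1$. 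Your argument bypasses all of this by reducing modulo $2$: since $\mathcal{O}/2\mathcal{O}\cong\F_{2}[a,b]/(a^{2},b^{4})$ has maximal ideal of nilpotency index $5$, no single element can generate this $8$-dimensional algebra, so $\mathcal{O}$ cannot equal $\Z[\alpha]$. This is both more elementary (no computer algebra) and, as you observe, strictly stronger: it does not use the hypothesis on relative generators at all, so it proves non-monogenicity for \emph{every} $c\in\Z_M\setminus\{0,\pm2\}$ with $-D\equiv 2,3\pmod{4}$, not only in the ranges covered by Theorem~\ref{tm:main}. The paper's approach, by contrast, yields the quantitative obstruction $256\mid J(\alpha)$ tied to the specific $\alpha_i$, but at the cost of a heavy symbolic computation that the authors report pushed Maple to its limits and could not be carried out for $-D\equiv 1\pmod{4}$.
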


\section{Preliminaries}

\label{s.PRe}

Since we are going to apply the method of I. Ga\'{a}l and M. Pohst given in
\cite{GP1}, we begin with a brief description of it.

Let $M$ be a field of degree $m$ and $K$ its quartic extension field generated
by an algebraic integer $\xi$ over $M$, ie. $K=M\left(  \xi\right)  $.
$\mathbb{Z}_{K}$ and $\mathbb{Z}_{M}$ denotes the ring of integers of $K$ and
$M$, respectively. Assume that a relative minimal polynomial of $\xi$ is given
by
\[
f\left(  t\right)  =t^{4}+a_{1}t^{3}+a_{2}t^{2}+a_{3}t+a_{4}\in\mathbb{Z}%
_{M}\left[  t\right]  .
\]
Also, assume that $d$ is the smallest natural number with the property
$d\mathbb{Z}_{K}\subseteq\mathbb{Z}_{M}\left[  \xi\right]  $ and
$i_{0}=\left[  \mathbb{Z}_{K}^{+}:\mathbb{Z}_{M}\left[  \xi\right]
^{+}\right]  $. Then each $\alpha\in\mathbb{Z}_{K}$ can be represented in the
form
\begin{equation}
\alpha=\frac{1}{d}\left(  a+x\xi+y\xi^{2}+z\xi^{3}\right)  ,\text{
\ }a,x,y,z\in\mathbb{Z}_{M}.\label{ad}%
\end{equation}
The (absolute) index of $\alpha$ can be factorized in the form%
\begin{equation}
I(\alpha)=\left[  \mathbb{Z}_{K}^{+}:\mathbb{Z}_{M}\left[  \alpha\right]
^{+}\right]  \left[  \mathbb{Z}_{M}\left[  \alpha\right]  ^{+}:\mathbb{Z}%
\left[  \alpha\right]  ^{+}\right]  .\label{abin}%
\end{equation}
If the relative index $I_{K/M}(\alpha)=\left[  \mathbb{Z}_{K}^{+}%
:\mathbb{Z}_{M}\left[  \alpha\right]  ^{+}\right]  $ is equal to $1$, then
$\alpha$ can only generate a power integral basis in $K$ (equivalently,
$I(\alpha)=1$).  In \cite{GP1} the following assertion was proved. If
$\alpha\in\mathbb{Z}_{K}$ given by (\ref{ad}) generates a relative power
integral basis of $\mathbb{Z}_{K}$ over $\mathbb{Z}_{M},$ then
\begin{equation}
N_{M/\mathbb{Q}}\left(  F\left(  u,v\right)  \right)  =\pm\frac{d^{6m}}{i_{0}%
},\label{norm}%
\end{equation}
where
\begin{equation}
F\left(  u,v\right)  =u^{3}-a_{2}u^{2}v+\left(  a_{1}a_{3}-4a_{4}\right)
uv^{2}+\left(  4a_{2}a_{4}-a_{3}^{2}-a_{1}^{2}a_{4}\right)  v^{3},\label{cf}%
\end{equation}
is a binary cubic form over $\mathbb{Z}_{M}$,
\begin{equation}
u=Q_{1}\left(  x,y,z\right)  ,\ \ v=Q_{2}\left(  x,y,z\right)  ,\label{UV}%
\end{equation}
and
\begin{align}
Q_{1}\left(  x,y,z\right)   &  =x^{2}-xya_{1}+y^{2}a_{2}+xz\left(  a_{1}%
^{2}-2a_{2}\right)  \label{Q1}\\[0.07in]
&  +yz\left(  a_{3}-a_{1}a_{2}\right)  +z^{2}\left(  -a_{1}a_{3}+a_{2}%
^{2}+a_{4}\right)  ,\nonumber\\
Q_{2}\left(  x,y,z\right)   &  =y^{2}-xz-yza_{1}+a_{2}z^{2},\label{Q2}%
\end{align}
are ternary quadratic forms over $\mathbb{Z}_{M}$.

Note that the equation (\ref{norm}) implies
\begin{equation}
F\left(  u,v\right)  =\delta\varepsilon,\label{fuv}%
\end{equation}
where $\delta$ is an integer in $M$ of the norm $\pm d^{6m}/i_{0}$ and
$\varepsilon$ is an unit in $M$. Hence, the full set of nonassociated elements
of this norm have to be considered.

In order to find all non-equivalent generators of power integral basis of
$\mathcal{O}$, the first step consists of solving the equation (\ref{fuv}),
ie. determining all (nonassociated) pairs $(u,v)\in\mathbb{Z}_{M}^{2}$ such
that all solutions of (\ref{fuv}) are of the form $(\eta u,\eta v)$, where
$\eta\in M$ is an unit. In the next step, we have to find all $\left(
x,y,z\right)  \in\mathbb{Z}_{M}^{3}$ corresponding to a fixed solution $(u,v)$
by solving the system (\ref{UV}). So, for a given solution $(u,v)$ of
(\ref{fuv}), we solve the following equation
\begin{equation}
Q_{0}\left(  x,y,z\right)  =uQ_{2}\left(  x,y,z\right)  -vQ_{1}\left(
x,y,z\right)  =0.\label{Q0}%
\end{equation}
Using the arguments of Siegel \cite[p.264]{SIG} (see also \cite{POH}), it is
possible to decide if (\ref{Q0}) has nontrivial solutions and if so, all
solutions of (\ref{Q0}) can be given in a parametric form (with two parameters
$p$ and $q$). By substituting these parametric representations of $u$ and $v$
into the original system (\ref{UV}), it can be shown that at least one of the
equations in (\ref{UV}) is a quartic Thue equation over $\mathbb{Z}_{M}$. By
solving that Thue equation, we are able to determine all parameters
$(p,q)\in\mathbb{Z}_{M}^{2}$ up to unit factors in $M$. Hence, we can
calculate all $\left(  x,y,z\right)  \in\mathbb{Z}_{M}^{3}$ up to a unit
factor of $M$, as well. Then all generators of power integral basis of
$\mathbb{Z}_{K}$ over $\mathbb{Z}_{M}$ are of the form
\[
\alpha=\frac{1}{d}\left(  a+\eta\left(  x\xi+x\xi^{2}+x\xi^{3}\right)
\right)  ,
\]
where $a\in\mathbb{Z}_{M}$, and the unit $\eta\in M$ is arbitrary.
Consequently, all non-equivalent generators of power integral basis of
$\mathcal{O}$ over $\mathbb{Z}_{M}$ are given by $\alpha=\frac{1}{d}\left(
x\xi+x\xi^{2}+x\xi^{3}\right)  $. For more details see \cite{GP1}.

Our purpose is to describe the relative power integral bases of either
$\mathcal{O=}\mathbb{Z}_{K}$ over $\mathbb{Z}_{M}$ (if the integer basis of
$K$ is known) or of $\mathcal{O=}\mathbb{Z}_{M}\left[  \xi\right]  $ over
$\mathbb{Z}_{M}$ (otherwise). Note that in the later case $\xi$ itself is a
generator of a relative power integral basis but we wonder if there exist any
other generators of power integral bases. Note that in case $\mathcal{O=}%
\mathbb{Z}_{M}\left[  \xi\right]  ,$ we have $i_{0}=d=1.\ $Consequently,
equation (\ref{norm}) is of the form $F\left(  u,v\right)  =\varepsilon,$
where $\varepsilon$ is unit in $M$ and non-equivalent generators of power
integral basis of $\mathcal{O}$ over $\mathbb{Z}_{M}$ are of the form
$\alpha=x\xi+y\xi^{2}+z\xi^{3},$ $\left(  x,y,z\right)  \in\mathbb{Z}_{M}^{3}$.

\section{Resolution of relative cubic equation}\label{sec:3}
Let $D$ be a square free positive integer and let $M=\mathbb{Q}%
\left(  \sqrt{-D}\right)  $ be an  imaginary quadratic with ring of integers
$\mathbb{Z}_{M}$. Let $\xi$ be a root of polynomial%
\begin{equation}
f\left(  t\right)  =t^{4}-2ct^{3}+2t^{2}+2ct+1,\text{ }\label{mp}%
\end{equation}
where $c\in\mathbb{Z}_{M},$ $c\notin\left\{  0,\pm2\right\}$.  We consider an 
infinite family of octic fields $K_{c}=M\left(  \xi\right)  \ $with ring of
integers $\mathbb{Z}_{K_{c}}$. It is easy to see that if $c=0,\pm2$, then 
$f\left(  t\right) $ is a reducible polynomial and so  $K_{c}$
is not an octic field. Therefore from now on we  assume that $c\in\mathbb{Z}
_{M}\backslash\left\{  0,\pm2\right\}$. Since the integral basis of $K_{c}$ is not known in a parametric
form, our goal is to determine all generators $\alpha$ of relative power
integral basis of $\mathcal{O=}\mathbb{Z}_{M}\left[  \xi\right]  $ over
$\mathbb{Z}_{M}\ $(instead of $\mathbb{Z}_{K_{c}}$ over $\mathbb{Z}_{M}$)$.$
In this case the equation (\ref{fuv}) is of the form
\begin{equation}
F\left( u,v\right) =\left( u+2v\right) \left( u-2\left( c+1\right) v\right)
\left( u+2\left( c-1\right) v\right) =\varepsilon ,  \label{Ff}
\end{equation}%
where $\varepsilon$ is an unit in $M,$ ie. $\varepsilon \in \{\pm 1,\pm i,\pm
\omega ,\pm \omega ^{2}\}\cap \mathbb{Z}_{M}$ and (\ref{Q1}),
 (\ref{Q2}) can be rewritten as  
\begin{align*}
Q_{1}\left( x,y,z\right) & =x^{2}+2cxy+2y^{2}+4\left( c^{2}-1\right)
xz+6cyz+z^{2}\left( 4c^{2}+5\right) \\
Q_{2}\left( x,y,z\right) & =y^{2}-xz+2cyz+2z^{2}.
\end{align*}%
According to (\ref{Ff}) we conclude that $u-2v$, $u-2\left( c+1\right) v$, $u+2\left( c-1\right) v$ are units in $\mathbb{Z}_{M}$ and  that implies $v=0$. Therefore, all solutions
of  (\ref{Ff}) are given by $\left( u,v\right) =\left( \eta
,0\right) $, where $\eta $ is an unit in $\mathbb{Z}_{M}$.

\section{Simultaneous Pellian equations}\label{dio.311}

In this part we show that solving the equation (\ref{Q0}) for $\left(  u,v\right)
=\left(  \eta,0\right) $  can be reduced to solving a system of
simultaneous Pellian equations. 

Let $c\in\mathbb{Z}_{M}\backslash\{0,\pm2\}$.
Since $v=0$, the equation (\ref{Q0}) implies
\begin{equation}
Q_{2}(x,y,z)=y^{2}-xz+2cyz+2z^{2}=0,\label{e1}%
\end{equation}
and $(x_{0},y_{0},z_{0})=(2,0,1)$ is one nontrivial solution of (\ref{e1}).
Therefore, all solutions can be parameterized by%
\begin{equation}
x=2r+p,\ y=q,\ z=r,\label{e2}%
\end{equation}
where $p,q,r\in M$ and $r\not =0$. By substituting (\ref{e2}) into (\ref{e1}),
we obtain%
\begin{equation}
q^{2}=r(p-2cq).\label{e3}%
\end{equation}
Further, if we multiply (\ref{e2}) by $k=p-2cq$, we get%
\begin{equation}
kx=2q^{2}+p^{2}-2cqp,\ ky=qp-2cq^{2},\ kz=q^{2}.\label{e4}%
\end{equation}
We can assume that $k,p,q\in\mathbb{Z}_{M}$ and since the corresponding
determinant equals $1$, the parameter $k$ must be an unit in $\mathbb{Z}_{M}$.
Now, by substituting $kx,ky,kz$ given by (\ref{e4}) into the equation
$Q_{1}(x,y,z)=\eta$ ($\eta$ is an unit in $\mathbb{Z}_{M}$) we obtain%
\begin{equation}
p^{4}-2cp^{3}q+2p^{2}q^{2}+2cpq^{3}+q^{4}=\mu,\label{e5}%
\end{equation}
where $\mu=k^{2}\eta$ is an unit in $\mathbb{Z}_{M}.$ This is a
relative Thue equation over $\mathbb{Z}_{M}$ and it can be transformed into a system of Pellian equations
\begin{align}
cV^{2}-(c+2)U^{2} &  =-2\mu,\label{e6}\\
\ (c-2)U^{2}-cZ^{2} &  =-2\mu,\label{e7}%
\end{align}
by putting%
\begin{equation}\label{UVZ}
U   =p^{2}+q^{2},\ 
V   =p^{2}+2pq-q^{2},\ 
Z   =-p^{2}+2pq+q^{2}. \end{equation}
Both of equations (\ref{e6}) and (\ref{e7}) are of the same form as the
equation already studied in \cite{JZ}, ie. of the form%
\begin{equation}\label{en1}
(k-1)x^{2}-(k+1)y^{2}=-2\mu.
\end{equation}

\begin{proposition}
[{\cite[Proposition 5.2]{JZ}}]\label{l.1}Let $k\in\mathbb{Z}_{M}$ and let 
$\mu\in\mathbb{Z}_{M}$ be an unit. Suppose $|k|\geq2$ or $k$ is not an element of
the set
\[
S=\{0,\pm1,\pm\sqrt{-1},\pm1\pm\sqrt{-1},\pm\sqrt{-2},\pm\sqrt{-3},\pm
\omega,\pm\omega^{2}\},
\]
with mixed signs, where $\omega=\frac{-1+\sqrt{-3}}{2}.$ If the  equation
(\ref{en1}) is solvable, then
\[
\mu\in\{1,-1,\omega,\omega^{2}\}.
\]
All solutions are of the form $(x,y)=(\pm x_{m},\pm y_{m})$,
with mixed signs, where the sequences $(x_{m})$ and $(y_{m})$ are given with
the recurrence relations%
\begin{align}
&  x_{0}=\epsilon,x_{1}=\epsilon(2k+1),\ x_{m+2}=2kx_{m+1}-x_{m}%
,\ m\geq0,\label{e10}\\
&  y_{0}=\epsilon,y_{1}=\epsilon(2k-1),\ y_{m+2}=2ky_{m+1}-y_{m}%
,\ m\geq0,\label{e11}%
\end{align}
where $\epsilon=1,\sqrt{-1},\omega^{2},\omega$ corresponds to $\mu
=1,-1,\omega,\omega^{2}$, respectively.
\end{proposition}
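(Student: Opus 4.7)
The plan is to treat (\ref{en1}) as a norm equation in a relative quadratic extension of $M$, exploit the standard Pell-type multiplicative structure to produce the recurrences, and then use a descent argument to rule out any other solutions. Concretely, multiplying (\ref{en1}) by $k-1$ rewrites it as $((k-1)x)^2-(k^2-1)y^2=-2\mu(k-1)$, so that each solution corresponds to an element $\alpha=(k-1)x+y\sqrt{k^2-1}$ of fixed norm in $\mathbb{Z}_M[\sqrt{k^2-1}]$. The element $\eta:=k+\sqrt{k^2-1}$ has norm $1$, so multiplication by $\eta$ permutes the solution set; unwinding this action in the $(x,y)$ coordinates gives the linear map $(x,y)\mapsto(kx+(k+1)y,(k-1)x+ky)$, whose characteristic equation $t^2-2kt+1=0$ produces exactly the recurrences (\ref{e10})--(\ref{e11}).

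A direct computation shows that $(x_0,y_0)=(\epsilon,\epsilon)$ satisfies (\ref{en1}) precisely when $\epsilon^2=\mu$, after which $\eta$ sends it to $(\epsilon(2k+1),\epsilon(2k-1))$. The constraint $\epsilon^2=\mu$ with both $\epsilon$ and $\mu$ units in $\mathbb{Z}_M$ forces $\mu$ to be a square of a unit; since the unit group is $\{\pm 1\}$, $\{\pm 1,\pm\sqrt{-1}\}$ or $\{\pm 1,\pm\omega,\pm\omega^2\}$ according to $D$, the admissible values of $\mu$ are exactly $\{1,-1,\omega,\omega^2\}$ with $\epsilon$ as listed, which settles the ``only if'' direction on $\mu$.

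The heart of the argument is the converse, that every solution has the form $(\pm x_m,\pm y_m)$. I would argue by descent: given any solution $(x,y)$, apply the inverse map $(x,y)\mapsto(kx-(k+1)y,-(k-1)x+ky)$ coming from multiplication by $\eta^{-1}=k-\sqrt{k^2-1}$; for $|k|\geq 2$ this strictly decreases $\max(|x|,|y|)$ outside an explicit range of order $|k|$. Iterating reduces to a bounded region in which a direct finite check identifies any remaining solution with $(\pm\epsilon,\pm\epsilon)$ up to the sign symmetries of (\ref{en1}). The main technical obstacle is carrying out this descent in the non-totally-real setting, where the usual monotonicity of real Pell equations is unavailable and one must control $\max(|x|,|y|)$ through each complex embedding simultaneously. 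It is this careful accounting, together with a short enumeration of small values of $k$, that produces the exceptional set $S$: for such $k$ either $k^2-1$ is too small to drive the descent down to the minimal solution, or additional fundamental classes emerge that cannot be captured by a single pair of recurrences.
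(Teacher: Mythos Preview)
The paper does not prove this proposition: it is quoted verbatim from \cite{JZ} (as the citation in the header indicates), and the text resumes immediately after the statement by applying it with $k=c\pm1$. So there is no proof in this paper to compare your attempt against; any comparison would have to be with the argument in \cite{JZ}.

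As for your sketch itself, the overall strategy (pass to the norm form in $\mathbb{Z}_M[\sqrt{k^2-1}]$, let the unit $\eta=k+\sqrt{k^2-1}$ act, and descend) is the natural one, and your computation of the action and the initial values $(x_0,y_0),(x_1,y_1)$ is correct. Two points deserve attention. First, your paragraph claiming that the constraint $\epsilon^2=\mu$ ``settles the only if direction on $\mu$'' is premature: at that stage you have only shown that $(\epsilon,\epsilon)$ is a solution precisely when $\epsilon^2=\mu$. The implication ``(\ref{en1}) solvable $\Rightarrow \mu\in\{1,-1,\omega,\omega^2\}$'' follows only \emph{after} you have established that every solution descends to $(\pm\epsilon,\pm\epsilon)$; the logical order should be reversed. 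Second, and more seriously, the descent itself is only asserted, not carried out. Your claim that for $|k|\ge2$ the inverse map strictly decreases $\max(|x|,|y|)$ outside a bounded region is exactly the step that requires work in an imaginary quadratic ring, and you yourself identify it as the main obstacle; the exceptional set $S$ is supposed to fall out of that analysis, but you do not derive it. As written this is a plausible plan of proof rather than a proof, with the central estimate missing.
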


For $k=c+1$ Proposition \ref{l.1} implies that if
{\small
$$
c\not \in \{{-1,-1\pm\sqrt{{ -1}},\pm\sqrt{{ -1}%
},-2\pm}\sqrt{{ -1}}{ ,-1\pm}\sqrt{{ -2}}{ ,-1\pm
}\sqrt{{-3}}{ ,}\frac{{ -1\pm}\sqrt{{-3}}%
}{{2}}{ ,}\frac{{ -3\pm}\sqrt{{ -3}}}{{2}}\},
$$
}
then all solutions $(V,U)$ of (\ref{e6}) are of the form $(\pm v_{m},\pm
u_{m})$ where%
\begin{align}
&  v_{0}=\epsilon,v_{1}=\epsilon(2c+3),\ v_{m+2}=(2c+2)v_{m+1}-v_{m}%
,\ m\geq0,\nonumber\\
&  u_{0}=\epsilon,u_{1}=\epsilon(2c+1),\ u_{m+2}=(2c+2)u_{m+1}-u_{m}%
,\ m\geq0.\label{e12}%
\end{align}
Similarly, if $k=c-1$ and%
\[
c\not \in \{{\small 1,1\pm}\sqrt{{\small -1}}{\small ,\pm}\sqrt
{{\small -1}}{\small ,2\pm}\sqrt{{\small -1}}{\small ,1\pm}\sqrt{{\small -2}%
}{\small ,1\pm}\sqrt{-3}{\small ,}\frac{{\small 1\pm}\sqrt{{\small -3}}%
}{{\small 2}}{\small ,}\frac{{\small 3\pm}\sqrt{{\small -3}}}{{\small 2}}\},
\]
then all solutions $(U,Z)$ of (\ref{e7}) are of the form $(\pm u_{n}^{\prime
},\pm z_{n})$ where
\begin{align}
&  u_{0}^{\prime}=\epsilon,u_{1}^{\prime}=\epsilon(2c-1),\ u_{n+2}^{\prime
}=(2c-2)u_{n+1}^{\prime}-u_{n}^{\prime},\ n\geq0,\label{e14}\\
&  z_{0}=\epsilon,z_{1}=\epsilon(2c-3),\ z_{n+2}=(2c-2)z_{n+1}-u_{n}
,\ n\geq0.\nonumber
\end{align}
Finally,  if $c\not \in S_{c}$ where $S_{c}$ is given
in (\ref{SC1}) and the system of equations (\ref{e6}) and (\ref{e7}) is
solvable, then  Proposition \ref{l.1} implies
$$\mu\in\{1,-1,\omega,\omega^{2}\}.$$
Furthermore, if $\left(  U,V,Z\right)  $ is a solution of that system, then
\[
U=\pm u_{m}=\pm u_{n}^{\prime}
\]
for some $n,m\in\mathbb{N}_{0}$, with mixed signs,  where $u_m$, $u_n'$ are given by (\ref{e12}),  (\ref{e14}) and   $\epsilon
=1,\sqrt{-1},\omega^{2},\omega$ corresponds to $\mu=1,-1,\omega,\omega^{2}$.
 Evidently, $U=\pm u_{0}=\pm u_{0}^{\prime}%
=\pm\epsilon$. So, the next step consists of determining eventual
intersections of sequences $(\pm u_{m})$ and $(\pm u_{n}^{\prime})$ for
$m,n\geq1$.

\section{Proof of the main Theorem for $|c|\geq159\,108$}\label{sec:3 CMiBT}

In this section we apply the \emph{congruence method} introduced in \cite{DP} to obtain lower bound for $|U|$.
Combining that result with a generalization of \emph{Bennett's theorem}, we are able to solve the system
(\ref{e6}) and (\ref{e7}) for large values of $|c|$. 

\subsection{A lower bound for a solution}

\begin{definition}
Let $a,b,d\in\mathbb{Z}_{M}$ and $d\not =0$. We say that $a$ is
\emph{congruent} $b$ \emph{modulo} $d$ if there exists $x\in\mathbb{Z}_{M}$
such that $a-b=dx$. We write $a\equiv b\left(  \operatorname{mod}d\right)  $.
\end{definition}

\begin{lemma}
\label{l.13}Let $|c|\geq2$. Sequences $(u_{m})$ and $(u_{n}^{\prime})$ given
by (\ref{e12}) and (\ref{e14}) satisfy the following inequalities%
\begin{equation}
(2|c|-3)^{m}\leq|u_{m}|\leq(2|c|+3)^{m},\ (2|c|-3)^{n}\leq|u_{n}^{\prime}%
|\leq(2|c|+3)^{n}, \label{e15}%
\end{equation}
for $m,n\geq0$.
\end{lemma}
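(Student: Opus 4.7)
The plan is to establish the two-sided bound by induction on $m$, using a strengthened inductive hypothesis about the ratio $|u_{m+1}|/|u_m|$ rather than about $|u_m|$ directly. Specifically, I would prove
\[
(|2c+2|-1)\,|u_m|\;\le\;|u_{m+1}|\;\le\;(|2c+2|+1)\,|u_m|\qquad(m\ge 0),
\]
from which the lemma follows by telescoping together with the elementary estimates $2|c|-2\le|2c+2|\le 2|c|+2$, giving $|2c+2|-1\ge 2|c|-3$ and $|2c+2|+1\le 2|c|+3$.

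The base case $m=0$ is immediate from $|u_0|=|\epsilon|=1$ and $u_1=\epsilon(2c+1)=\epsilon((2c+2)-1)$ via the triangle inequality. The crucial arithmetic input, and really the only non-trivial step in the argument, is the inequality $|2c+2|\ge 2$: since $|c|\ge 2$ rules out $c=-1$, the element $c+1$ is a nonzero algebraic integer in $M$, so $|c+1|^{2}=N_{M/\mathbb{Q}}(c+1)$ is a positive rational integer, forcing $|c+1|\ge 1$ and hence $|2c+2|\ge 2$. This is the place where the hypothesis $|c|\ge 2$ is actually used.

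For the inductive step, $|2c+2|-1\ge 1$ together with the lower-bound half of the hypothesis yields $|u_m|\le|u_{m+1}|$, so from $u_{m+2}=(2c+2)u_{m+1}-u_m$ one immediately obtains
\[
|u_{m+2}|\;\le\;|2c+2|\,|u_{m+1}|+|u_m|\;\le\;(|2c+2|+1)\,|u_{m+1}|.
\]
For the corresponding lower bound, the hypothesis is used in the form $|u_m|\le|u_{m+1}|/(|2c+2|-1)$, yielding
\[
|u_{m+2}|\;\ge\;|2c+2|\,|u_{m+1}|-|u_m|\;\ge\;\frac{|2c+2|^{2}-|2c+2|-1}{|2c+2|-1}\,|u_{m+1}|,
\]
and the desired inequality reduces to $|2c+2|^{2}-|2c+2|-1\ge(|2c+2|-1)^{2}$, which is exactly $|2c+2|\ge 2$.

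The same proof works verbatim for $(u_n')$ after replacing $2c+2$ by $2c-2$ throughout; here the analogous estimate $|2c-2|\ge 2$ holds because $|c|\ge 2$ forces $c\ne 1$, so $c-1\in\mathbb{Z}_M\setminus\{0\}$ gives $|c-1|\ge 1$. Past the arithmetic step $|2c\pm 2|\ge 2$, every remaining inequality is elementary, so I expect no real obstacle beyond setting up the ratio induction correctly.
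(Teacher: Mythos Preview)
Your proof is correct and follows essentially the same inductive strategy as the paper: establish monotonicity of $|u_m|$ and then deduce the ratio bounds $(2|c|-3)|u_m|\le|u_{m+1}|\le(2|c|+3)|u_m|$ via the triangle inequality applied to the recursion. The only cosmetic differences are that you package monotonicity and the ratio bound into a single induction, and that your norm argument for $|2c+2|\ge 2$ is more than necessary---the hypothesis $|c|\ge 2$ already gives $|c+1|\ge|c|-1\ge 1$ directly from the triangle inequality.
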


\begin{proof}
The inequality for $|u_{n}^{\prime}|$ is given in \cite[ Lemma 5.5.]{JZ}.
Similarly, we prove the other one. First, we show by induction that
$(|u_{m}|)$ is a growing sequence. Evidently,%
\[
|u_{1}|=|2c+1|\geq2|c|-1\geq1=|u_{0}|.
\]
If $|u_{m}|\geq|u_{m-1}|$ for some $m\in\mathbb{N}$, then%
\[
|u_{m+1}|\geq|2c+2||u_{m}|-|u_{m-1}|\geq|2c+2||u_{m}|-|u_{m}|=(2|c|-3)|u_{m}%
|\geq|u_{m}|.
\]
Since, $|u_{0}|=(2|c|-3)^{0}$ and $|u_{1}|\geq2|c|-1\geq(2|c|-3)^{1}$, the
previous inequality $|u_{m+1}|\geq(2|c|-3)|u_{m}|$ also implies that
$|u_{m}|\geq(2|c|-3)^{m}$ for $m\geq0$. Also, since%
\[
|u_{0}|=(2|c|+3)^{0},\ |u_{1}|=|2c+1|\leq(2|c|+1)\leq(2|c|+3)^{1}.
\]
and%
\[
|u_{m+1}|\leq(2|c|+2)|u_{m}|+|u_{m-1}|\leq(2|c|+3)|u_{m}|,
\]
we obtain that $|u_{m}|\leq(2|c|+3)^{m}$ for $m\geq0$.
\end{proof}

\begin{lemma}
\label{l.14}Sequences $(\pm u_{m})$ and $(\pm u_{n}^{\prime})$ given by
(\ref{e12}) and (\ref{e14}) satisfy the following congruences%
\begin{align}
u_{m}  &  \equiv\epsilon(1+m(m+1)c)\pmod{4c^2},\label{e18}\\
u_{n}^{\prime}  &  \equiv(-1)^{n}\epsilon(1-n(n+1)c)\pmod{4c^2},
\end{align}
for $m,n\geq0$.
\end{lemma}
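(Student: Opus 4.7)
The plan is to prove both congruences by a straightforward strong induction on $m$ (respectively $n$), using the defining linear recurrences for $(u_m)$ and $(u_n')$.

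First I would verify the two base cases $m=0,1$ directly. For $m=0$ we have $u_0=\epsilon$ and $\epsilon(1+0\cdot 1\cdot c)=\epsilon$, and for $m=1$ we have $u_1=\epsilon(2c+1)=\epsilon(1+1\cdot 2\cdot c)$, so both congruences hold modulo $4c^2$ (in fact, as equalities). The analogous check works for $u_n'$: at $n=0$ we get $\epsilon$, and at $n=1$ we get $\epsilon(2c-1)=(-1)\epsilon(1-1\cdot 2\cdot c)$.

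For the inductive step on $(u_m)$, assume the congruence for $m$ and $m+1$ and plug into $u_{m+2}=(2c+2)u_{m+1}-u_m$:
\begin{align*}
u_{m+2}\equiv\epsilon\bigl[(2c+2)(1+(m+1)(m+2)c)-(1+m(m+1)c)\bigr]\pmod{4c^2}.
\end{align*}
Expanding, the only term of degree $\ge 2$ in $c$ is $2(m+1)(m+2)c^2$. The key observation — the main (and only) substantive point in the proof — is that $(m+1)(m+2)$ is a product of two consecutive integers, hence even, so $2(m+1)(m+2)c^2\equiv 0\pmod{4c^2}$ and this term vanishes. A short algebraic simplification of the remaining linear-in-$c$ terms gives
\begin{align*}
u_{m+2}\equiv \epsilon\bigl(1+[2+(m+1)(m+4)]c\bigr)=\epsilon\bigl(1+(m+2)(m+3)c\bigr)\pmod{4c^2},
\end{align*}
which is exactly what we want.

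The proof for $u_n'$ is entirely parallel, using $u_{n+2}'=(2c-2)u_{n+1}'-u_n'$, keeping track of the alternating sign $(-1)^n$, and invoking the same parity observation to discard the term $2c^2(n+1)(n+2)\equiv 0\pmod{4c^2}$. The remaining simplification reduces $-(2c-2)+(n+1)(n+4)(-c)\cdot$ etc.\ to $-c(n+2)(n+3)$, matching the claim up to the factor $(-1)^{n+2}=(-1)^n$. The only obstacle worth flagging is the parity observation above; everything else is bookkeeping, and since $\mathbb{Z}_M$ is a commutative ring in which $2$ and $c^2$ behave as ordinary elements, no subtlety about the imaginary quadratic setting enters.
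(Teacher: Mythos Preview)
Your proof is correct and follows essentially the same approach as the paper: induction on the recurrence, with the same key observation that the $c^2$-term carries an even coefficient (a product of consecutive integers) and hence vanishes modulo $4c^2$. The only cosmetic difference is that the paper cites \cite[Lemma~6.2]{JZ} for the $u_n'$ case rather than repeating the parallel induction you sketch.
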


\begin{proof}
The congruence relation for $u_{n}^{\prime}$ has already been proved in
\cite[Lemma 6.2.]{JZ}. The other relation can be easily shown by induction. Recall that 
 $u_{0}=\epsilon$ and $u_{1}=\epsilon(2c+1)$. Hence, (\ref{e18}) is true for $m=0,1$. Now, assume that
$u_{k}\equiv\epsilon(1+k(k+1)c)\pmod{4c^2}$, for $k<m$ and $m\geq2$. We
obtain%
\begin{align*}
u_{m}  &  =(2c+2)u_{m-1}-u_{m-2}\equiv(2c+2)\epsilon(1+(m-1)mc)-\epsilon
(1+(m-2)(m-1)c)\\
&  \equiv\epsilon(1+2m(m-1)c^{2}+m(m+1)c)\equiv\epsilon(1+m(m+1)c)\pmod{4c^2},
\end{align*}
for $m\geq2$.
\end{proof}

\begin{proposition}\label{p.15}
Let  $c\notin S_{c}$. If $u_{m}=\pm u_{n}^{\prime}$, then%
$$m\geq\sqrt{2|c|+0.25}-0.5\ \text{ or }\ n\geq\sqrt{2|c|+0.25}-0.5\ \text{ or
}\ m=n=0.$$

\end{proposition}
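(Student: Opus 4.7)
The plan is to exploit the congruences from Lemma \ref{l.14} together with elementary norm estimates in $\mathbb{Z}_M$. Setting $B := \sqrt{2|c|+0.25}-0.5$, note that $m < B$ is equivalent to $m(m+1) < 2|c|$ (as $(m+0.5)^2 < 2|c|+0.25$), and similarly for $n$. I will assume $u_m = \pm u_n^{\prime}$ together with $m(m+1) < 2|c|$ and $n(n+1) < 2|c|$, and show this forces $m=n=0$. Reducing modulo $4c^2$ and applying Lemma \ref{l.14}, after cancelling the unit $\epsilon$, produces
\[
1 + m(m+1)c \equiv \sigma\,(1 - n(n+1)c) \pmod{4c^2}
\]
for some sign $\sigma = \pm(-1)^n \in \{+1,-1\}$.

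The next step is to split on the sign $\sigma$. In the case $\sigma=+1$, the congruence reduces to $4c \mid s$ in $\mathbb{Z}_M$ with $s:=m(m+1)+n(n+1)\in\mathbb{Z}_{\geq 0}$. Writing $s = 4cr$ with $r\in\mathbb{Z}_M$ and taking norms gives $s^2 = 16\,N(c)\,N(r)$, hence either $s=0$ or $s \geq 4|c|$. The hypothesis $m(m+1),n(n+1)<2|c|$ gives $s<4|c|$, ruling out the latter, so $s=0$ and $m=n=0$.

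The subtler case is $\sigma=-1$, where the congruence becomes $4c^2 \mid 2+\ell c$ with $\ell := m(m+1)-n(n+1) \in \mathbb{Z}$ and $|\ell| < 2|c|$. Writing $c = a + b\sqrt{-D}$, one computes
\[
N(2+\ell c) = (2+a\ell)^2 + Db^2\ell^2 \leq 4 + 4|a||\ell| + \ell^2 N(c) < 4\bigl(1+N(c)\bigr)^2,
\]
using $|a|\leq\sqrt{N(c)}$ and $\ell^2 < 4N(c)$. Since $c\neq 0$ gives $N(c)\geq 1$, we have $4(1+N(c))^2 \leq 16\,N(c)^2 = N(4c^2)$, so $N(2+\ell c) < N(4c^2)$. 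The divisibility then requires $2+\ell c=0$, i.e. $c = -2/\ell \in \mathbb{Q}$; combined with $c\in\mathbb{Z}_M$ this forces $c\in\mathbb{Z}$ and $\ell\mid 2$, so $c \in \{\pm 1,\pm 2\}$, all excluded by $c\notin S_c\cup\{0,\pm 2\}$. The main technical obstacle I anticipate is this second case: one must be careful that the norm inequality is strict and that the reduction to $c\in\mathbb{Z}$ genuinely pins $c$ down to the excluded set for every imaginary quadratic $M$.
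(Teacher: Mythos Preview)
Your proof is correct. The case $\sigma=+1$ is handled exactly as in the paper (the paper phrases it as $\epsilon\bigl(\tfrac{m(m+1)}{2}+\tfrac{n(n+1)}{2}\bigr)\equiv 0\pmod{2c}$, which is your $4c\mid s$). The difference lies in the case $\sigma=-1$. You keep the full modulus $4c^{2}$, bound $N(2+\ell c)$ explicitly, and squeeze $c$ into $\{\pm1,\pm2\}$; this works, and your care with the strict inequality is warranted. The paper, however, disposes of this case in one line: since $m(m+1)$ and $n(n+1)$ are even, reducing the congruence modulo $2c$ gives $\epsilon(1-\sigma)\equiv 0\pmod{2c}$, so for $\sigma=-1$ one gets $2c\mid 2\epsilon$, i.e.\ $|c|=1$, which is already excluded by $c\notin S_c$. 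Your norm computation is therefore unnecessary, though it does have the minor advantage of not relying on the fact that every algebraic integer of absolute value~$1$ in an imaginary quadratic field lies in $S_c$.
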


\begin{proof}
If $u_{m}=\pm u_{n}^{\prime}$, then Lemma \ref{l.14} implies that%
\[
\epsilon(1+m(m+1)c)\equiv\pm(-1)^{n}\epsilon(1-n(n+1)c)\pmod{4c^2}.
\]
Therefore we have the following congruence relation
\[
\epsilon(1\mp(-1)^{n})\equiv0\pmod{2c}.
\]
If $\epsilon(1\mp(-1)^{n})\not =0$, then $|\epsilon(1\mp(-1)^{n})|=2$ and
$|c|=1$, which is not possible. So, we conclude that $\mp(-1)^{n}=-1$ and%
\[
\epsilon(1+m(m+1)c)\equiv\epsilon(1-n(n+1)c)\pmod{4c^2}.
\]
Furthermore,%
\begin{equation}
\epsilon\left(  \frac{m(m+1)}{2}+\frac{n(n+1)}{2}\right)  \equiv
0\pmod{2c}.\label{A}%
\end{equation}
Consider the algebraic integer
\[
A=\epsilon\left(  \frac{m(m+1)}{2}+\frac{n(n+1)}{2}\right)  .
\]
It is clear that $A\not =0$ for $m>0$ or $n>0$. So, (\ref{A}) implies that
$|A|\geq2|c|$. Hence,
\[
m(m+1)\geq2|c|\ \text{ or }\ n(n+1)\geq2|c|,
\]
ie.%
$$
m\geq\sqrt{2|c|+0.25}-0.5\ \text{ or }\ n\geq\sqrt{2|c|+0.25}-0.5.$$
\end{proof}

Finally, the previous proposition yields a lower bound for a nontrivial
solution of equations (\ref{e6}) and (\ref{e7}).

\begin{corollary}
\label{c.16}Let $c\notin S_{c}$. If $U\in\mathbb{Z}_{M}\backslash\{\pm
\epsilon\}$ is a solution of the system of equations (\ref{e6}) and
(\ref{e7}), then%
\[
|U|\geq(2|c|-3)^{\sqrt{2|c|+0.25}-0.5}.
\]

\end{corollary}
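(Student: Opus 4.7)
The plan is to derive Corollary \ref{c.16} as an immediate consequence of Proposition \ref{p.15} combined with the growth estimates of Lemma \ref{l.13}. First I would invoke the structural result established at the end of Section \ref{dio.311}: since $c\notin S_{c}$, Proposition \ref{l.1} (applied with $k=c+1$ and $k=c-1$) forces any common solution $U$ of the simultaneous Pellian system (\ref{e6})--(\ref{e7}) to satisfy $U=\pm u_{m}=\pm u_{n}^{\prime}$ for some integers $m,n\geq 0$, where the sequences $(u_{m})$ and $(u_{n}^{\prime})$ are those given by the recurrences (\ref{e12}) and (\ref{e14}).

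Next I would isolate the trivial case. Both sequences start at $u_{0}=u_{0}^{\prime}=\epsilon$, so the choice $m=n=0$ yields precisely $U=\pm\epsilon$. Since the hypothesis excludes these values, I must have $(m,n)\neq(0,0)$. Applying Proposition \ref{p.15} then produces the key dichotomy
\[
m\geq \sqrt{2|c|+0.25}-0.5 \quad\text{or}\quad n\geq \sqrt{2|c|+0.25}-0.5.
\]

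To convert this index bound into a size bound on $|U|$, I would appeal to Lemma \ref{l.13}. Because $c\notin S_{c}$ and $c\notin\{0,\pm 2\}$, a brief inspection of $S_{c}$ shows $|c|\geq 2$, so $2|c|-3\geq 1$ and the function $t\mapsto (2|c|-3)^{t}$ is nondecreasing on $[0,\infty)$. In the first branch of the dichotomy,
\[
|U|=|u_{m}|\geq (2|c|-3)^{m}\geq (2|c|-3)^{\sqrt{2|c|+0.25}-0.5},
\]
and in the second branch the identical estimate is obtained symmetrically from $|U|=|u_{n}^{\prime}|\geq (2|c|-3)^{n}$.

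In either case the asserted lower bound holds, completing the proof. I do not foresee any substantive obstacle here: the only mild points requiring care are the verification that the hypothesis $c\notin S_{c}$ together with $c\notin\{0,\pm 2\}$ indeed guarantees $|c|\geq 2$ so that the exponential base $2|c|-3$ is at least $1$, and the bookkeeping identifying $U=\pm\epsilon$ with the index pair $(m,n)=(0,0)$.
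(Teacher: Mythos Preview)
Your approach is exactly the paper's: the authors' proof is the one line ``It follows straight away from Lemma \ref{l.13} and Proposition \ref{p.15},'' and your write-up simply unpacks that sentence.

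One small correction to your closing remark: the claim that $c\notin S_{c}\cup\{0,\pm 2\}$ forces $|c|\geq 2$ is not true. For instance $c=\sqrt{-2}$, $c=\sqrt{-3}$, $c=\tfrac{1\pm\sqrt{-7}}{2}$ and $c=\tfrac{1\pm\sqrt{-11}}{2}$ all lie outside $S_{c}$ but have $|c|<2$; for these values $2|c|-3<0$ and the stated bound (and Lemma \ref{l.13} itself, which assumes $|c|\geq 2$) does not apply. The paper glosses over the same point, and it is harmless in context because the corollary is only invoked later for $|c|\geq 159108$; but you should not assert that inspection of $S_{c}$ gives $|c|\geq 2$.
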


\begin{proof}
If follows straight away from Lemma \ref{l.13} and Proposition \ref{p.15}.
\end{proof}

\subsection{An upper bound for a solution}

The number of solutions of simultaneous Pellian equations can be bounded by a
theorem on simultaneous approximations by rationals to the square roots of
rationals near $1$ introduced by M. Bennett in \cite{Ben}. In fact, we need its
generalization for imaginary quadratic fields stated and proved in \cite{JZ}.
Namely, we use the following theorem:
\begin{theorem}
[{\cite[Theorem 7.1]{JZ}}]\label{t.borka} Let $\theta_{i}=\sqrt{1+\frac{a_{i}%
}{T}}$ for $1\leq i\leq m$, with $a_{i}$ pairwise distinct imaginary quadratic
integers in $K:=\mathbb{Q}(\sqrt{-D})$ with $0<D\in\mathbb{Z}$ for
$i=0,\ldots,m$ and let $T$ be an algebraic integer of $K$. Furthermore, let
$M:=\max|a_{i}|$, $|T|>M$ and $a_{0}=0$ and%
\[%
\begin{array}
[c]{cc}%
l=c_{m}\frac{\left(  m+1\right)  ^{m+1}}{m^{m}}\cdot\frac{|T|}{|T|-M}, &
\text{ \ \ \ \ }L=|T|^{m}\frac{\left(  m+1\right)  ^{m+1}}{4m^{m}\prod_{0\leq
i<j\leq m}|a_{j}-a_{i}|^{2}}\cdot\left(  \frac{|T|-M}{|T|}\right)  ^{m},
\end{array}
\]%
\[%
\begin{array}
[c]{cc}%
p=\sqrt{\frac{2|T|+3M}{2|T|-2M}}, & \text{ \ \ \ \ \ \ \ }P=|T|\cdot
2^{m+3}\frac{\prod_{0\leq i<j\leq m}|a_{i}-a_{j}|^{2}}{\min_{i\not =j}%
|a_{i}-a_{j}|^{m+1}}\cdot\frac{2|T|+3M}{2|T|},
\end{array}
\]
where $c_{m}=\frac{3\Gamma\left(  m-\frac{1}{2}\right)  }{4\sqrt{\pi}%
\Gamma\left(  m+1\right)  },$ such that $L>1$, then
\[
\max\left(  \left\vert \theta_{1}-\frac{p_{1}}{q}\right\vert ,\ldots
,\left\vert \theta_{m}-\frac{p_{m}}{q}\right\vert \right)  >cq^{-\lambda}%
\]
for all algebraic integers $p_{1},\ldots,p_{m},q\in K$, where
\begin{align*}
\lambda &  =1+\frac{\log P}{\log L}\quad\text{and}\\
C^{-1}  &  =2mpP\left(  \max\left\{  1,2l\right\}  \right)  ^{\lambda-1}.
\end{align*}

\end{theorem}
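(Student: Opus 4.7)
The plan is to prove Theorem \ref{t.borka} by adapting Bennett's hypergeometric method from \cite{Ben} to the imaginary quadratic setting, following the standard Thue--Siegel--Baker template of \textbf{construct approximants, apply a gap/determinant principle, optimize}.

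First, I would build a family of Pad\'e-type approximants to $(1-z)^{1/2}$ adapted to $z=-a_{i}/T$. Explicitly, for each non-negative integer $k$ one defines polynomials $P_{k}^{(i)}(z), Q_{k}^{(i)}(z)$ of degree $k$, obtained from the hypergeometric series ${}_{2}F_{1}(-k,-k-\tfrac{1}{2};-2k;z)$ and its companion, such that
\[
(1-z)^{1/2} Q_{k}^{(i)}(z) - P_{k}^{(i)}(z) = z^{2k+1} E_{k}^{(i)}(z),
\]
with $E_{k}^{(i)}$ analytic at $0$. After clearing a common rational denominator (which, by the classical Chudnovsky--Bennett estimate using Chebyshev-type prime counting, grows like $C_{m}^{k}$ with an explicit $C_{m}$), the rescaled polynomials have coefficients in $\mathbb{Z}\subset\mathbb{Z}_{M}$. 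Evaluating at $z=-a_{i}/T$ then produces simultaneous algebraic-integer approximations $p_{k,i},q_{k}\in\mathbb{Z}_{M}$ to $\theta_{i}$ whose sizes are governed exactly by the constants $l$ and $L$ appearing in the statement.

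Second, I would prove the key non-vanishing lemma: for any two consecutive indices $k$ and $k+1$, the $(m+1)\times(m+1)$ matrix formed from the approximants at these levels, together with any candidate row $(q,p_{1},\ldots,p_{m})$, has non-zero determinant. This follows from the standard Jacobi integral representation of the approximants combined with the fact that the $a_{i}$ are pairwise distinct; the denominator $\prod_{i<j}|a_{i}-a_{j}|^{2}$ in the definition of $L$ and $P$ records precisely this non-degeneracy. Then comes the gap/determinant argument: if there existed $p_{1},\ldots,p_{m},q\in\mathbb{Z}_{M}$ with
\[
\max_{i}\Bigl|\theta_{i}-\tfrac{p_{i}}{q}\Bigr| \leq C q^{-\lambda},
\]
expanding the determinant along the row $(q,p_{1},\ldots,p_{m})$ yields, on one hand, a non-zero element of $\mathbb{Z}_{M}$ (hence of modulus $\geq 1$), and on the other hand an upper bound in terms of $|q|$, $L^{k}$, $P^{k}$, $l$ and $p$ via the archimedean size estimates. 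Choosing $k$ so that $L^{k}\asymp|q|$ and rearranging produces exactly the exponent $\lambda = 1+\log P/\log L$ and the constant $C^{-1}=2mpP(\max\{1,2l\})^{\lambda-1}$.

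The main obstacle is obtaining the sharp numerical constants $l, L, p, P$ rather than merely a bound of the correct qualitative shape. Three issues must be handled simultaneously: (i) controlling the exact growth of the common denominator of the hypergeometric approximants, which requires the Chebyshev estimates used by Bennett; (ii) replacing real absolute values by the complex modulus on $K=\mathbb{Q}(\sqrt{-D})$ throughout, which is harmless archimedeanly but changes the shape of the factor $\sqrt{(2|T|+3M)/(2|T|-2M)}$ in $p$; and (iii) tracking the dependence on the pairwise differences $|a_{i}-a_{j}|$ so that the formula for $P$ and the hypothesis $L>1$ come out in the stated form. Once these bookkeeping issues are resolved, the argument is essentially Bennett's, with the real-quadratic input replaced by the fact that $\mathbb{Z}_{M}$ is still a discrete subring of $\mathbb{C}$ so that non-zero elements have modulus $\geq 1$.
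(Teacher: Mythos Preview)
The paper does not prove Theorem~\ref{t.borka}; it merely quotes it verbatim from \cite[Theorem~7.1]{JZ} and uses it as a black box to derive the upper bound (\ref{e.19}) on $|U|$. There is therefore no proof in this paper to compare your proposal against.

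That said, your sketch is a faithful outline of the Bennett hypergeometric method that \cite{JZ} adapts from \cite{Ben}: Pad\'e approximants to $(1-z)^{1/2}$ via ${}_2F_1$, a non-vanishing determinant lemma exploiting the pairwise distinctness of the $a_i$, and the gap principle using that non-zero elements of $\mathbb{Z}_M$ have modulus at least $1$. Your identification of the three bookkeeping issues (denominator growth, replacing real absolute values by complex moduli, and tracking the $|a_i-a_j|$ factors) is exactly right; these are precisely the places where \cite{JZ} has to rework Bennett's real argument. If you want to verify the exact constants $l,L,p,P$ and the formula $C^{-1}=2mpP(\max\{1,2l\})^{\lambda-1}$, you will need to consult \cite{JZ} directly, since nothing in the present paper allows you to check them.
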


The first step in the application of Theorem \ref{t.borka} consists of
choosing suitable values for $\theta_{1}$ and $\theta_{2}$. Let $(U,V,Z)\in
\mathbb{Z}_{M}^{3}$ be a solution of system of Pellian equations (\ref{e6})
and (\ref{e7}). The
candidates for $\theta_{1}$ and $\theta_{2}$ are%
\begin{equation}
\theta_{1}^{(1)}=\pm\sqrt{\frac{c+2}{c}},\ \theta_{2}^{(1)}=\pm\sqrt
{\frac{c-2}{c}},\ \theta_{1}^{(2)}=-\theta_{1}^{(1)},\ \theta_{2}%
^{(2)}=-\theta_{2}^{(1)},\label{e.theta}%
\end{equation}
where the signs are chosen such that%
\[
|V-\theta_{1}^{(1)}U|<|V-\theta_{1}^{(2)}U|\ \ \text{and}\ \ \ |Z-\theta
_{2}^{(1)}U|<|V-\theta_{2}^{(2)}U|.
\]
The next lemma shows that $\frac{V}{U}$ and $\frac{Z}{U}$ are good
approximations to the algebraic numbers $\theta_{1}^{(1)}$ and $\theta
_{2}^{(1)}$.

\begin{lemma}
\label{l.18}Let $|c|>2$. If $(U,V,Z)\in\mathbb{Z}_{M}^{3}$ is a solution of
(\ref{e6}) and (\ref{e7}), then%
\[
\max\left\{  \left\vert \theta_{1}^{(1)}-\frac{V}{U}\right\vert ,\left\vert
\theta_{2}^{(1)}-\frac{Z}{U}\right\vert \right\}  \leq\frac{2}{\sqrt
{|c|(|c|-2)}}|U|^{-2}.
\]

\end{lemma}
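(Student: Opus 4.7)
The plan is to prove the two estimates separately, one for each Pellian equation in the system, and take the maximum. The argument is the standard factorization trick adapted to the imaginary quadratic setting.

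First I would handle the bound for $\theta_1^{(1)} = \pm\sqrt{(c+2)/c}$. Rewriting equation (\ref{e6}) as
\[
V^2 - \frac{c+2}{c}U^2 = -\frac{2\mu}{c},
\]
and factoring, I obtain
\[
(V-\theta_1^{(1)} U)(V+\theta_1^{(1)} U) = -\frac{2\mu}{c}.
\]
Since $\mu$ is a unit of $\mathbb{Z}_M$, taking absolute values gives
\[
|V-\theta_1^{(1)} U|\cdot|V+\theta_1^{(1)} U| = \frac{2}{|c|}.
\]
This is the key identity. From here the problem reduces to producing a lower bound for $|V+\theta_1^{(1)} U|$.

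Second, I would exploit the choice of sign in (\ref{e.theta}). Since $\theta_1^{(2)}=-\theta_1^{(1)}$, the inequality $|V-\theta_1^{(1)}U|\le |V-\theta_1^{(2)}U|$ is exactly $|V-\theta_1^{(1)}U|\le |V+\theta_1^{(1)}U|$. Applied to the triangle inequality
\[
|V-\theta_1^{(1)}U|+|V+\theta_1^{(1)}U| \ge \bigl|(V+\theta_1^{(1)}U)-(V-\theta_1^{(1)}U)\bigr| = 2|\theta_1^{(1)}||U|,
\]
this yields $|V+\theta_1^{(1)}U| \ge |\theta_1^{(1)}||U|$. Next, using the reverse triangle inequality in $\mathbb{C}$, $|c+2|\ge |c|-2$ (valid since $|c|>2$), so
\[
|\theta_1^{(1)}| = \sqrt{\frac{|c+2|}{|c|}} \ge \sqrt{\frac{|c|-2}{|c|}}.
\]
Combining these two steps I get $|V+\theta_1^{(1)}U|\ge |U|\sqrt{(|c|-2)/|c|}$, and plugging this into the factorization identity produces
\[
|V-\theta_1^{(1)}U| = \frac{2/|c|}{|V+\theta_1^{(1)}U|} \le \frac{2}{|U|\sqrt{|c|(|c|-2)}}.
\]
Dividing by $|U|$ gives the desired bound on $|\theta_1^{(1)} - V/U|$.

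Third, the identical argument applied to equation (\ref{e7}) --- factoring $(c-2)U^2 - cZ^2 = -2\mu$ as
\[
c(Z-\theta_2^{(1)}U)(Z+\theta_2^{(1)}U) = -2\mu(-1)?\ldots
\]
more precisely rewriting it as $Z^2 - \frac{c-2}{c}U^2 = \frac{2\mu}{c}$ and then using $|c-2|\ge |c|-2$ --- yields the analogous bound on $|\theta_2^{(1)} - Z/U|$. Taking the maximum of the two bounds finishes the proof. No step is really an obstacle; the only subtle point is the passage from the absolute value $|\theta_1^{(1)}|$ (a complex square root) to the clean lower bound $\sqrt{(|c|-2)/|c|}$, which is handled by the reverse triangle inequality and the hypothesis $|c|>2$.
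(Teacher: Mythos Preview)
Your proposal is correct and follows essentially the same route as the paper: factor $V^{2}-\frac{c+2}{c}U^{2}=-\frac{2\mu}{c}$, use the sign convention $|V-\theta_{1}^{(1)}U|\le|V-\theta_{1}^{(2)}U|=|V+\theta_{1}^{(1)}U|$ together with the triangle inequality to get $|V+\theta_{1}^{(1)}U|\ge|\theta_{1}^{(1)}||U|$, and bound $|\theta_{1}^{(1)}|\ge\sqrt{(|c|-2)/|c|}$ via $|c+2|\ge|c|-2$. The paper carries out the $\theta_{1}^{(1)}$ case exactly this way and cites \cite[Lemma 8.1]{JZ} for the $\theta_{2}^{(1)}$ case, which your sketch correctly treats by the symmetric argument.
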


\begin{proof}
We have%
\begin{align*}
|V-\theta_{1}^{(2)}U|  &  \geq\frac{1}{2}(|V-\theta_{1}^{(1)}U|+|V-\theta
_{1}^{(2)}U|)\\
&  \geq\frac{1}{2}|U||\theta_{1}^{(1)}-\theta_{1}^{(2)}|=|U|\left\vert
\sqrt{\frac{c+2}{c}}\right\vert \geq|U|\sqrt{\frac{|c|-2}{|c|}},
\end{align*}
This implies%
\[
\left\vert \theta_{1}^{(1)}-\frac{V}{U}\right\vert =\left\vert \frac{c+2}%
{c}-\frac{V^{2}}{U^{2}}\right\vert \left\vert \theta_{1}^{(2)}-\frac{V}%
{U}\right\vert ^{-1}\leq\frac{2}{|c||U|^{2}}\sqrt{\frac{|c|}{|c|-2}}=\frac
{2}{\sqrt{|c|(|c|-2)}}|U|^{-2}.
\]
Inequality
\[
\left\vert \theta_{2}^{(1)}-\frac{Z}{U}\right\vert \leq\frac{2}{\sqrt
{|c|(|c|-2)}}|U|^{-2}%
\]
is proved in \cite[Lemma 8.1]{JZ}.
\end{proof}

The inputs of Theorem \ref{t.borka} are $m=2$, $\theta_{1}=\theta_{1}^{(1)}$,
$\theta_{2}=\theta_{2}^{(1)}$, $a_{1}=2$, $a_{2}=-2$, $M=2$, $T=c$, with
$|T|=|c|>2$,%
\begin{align*}
l &  =\frac{27}{64}\frac{|c|}{|c|-2},\text{ \ }L=\frac{27}{4096}%
(|c|-2)^{2}>1\ \text{if }\ |c|\geq15,\\
p &  =\sqrt{\frac{|c|+3}{|c|-2}},\text{ \ }P=1024(|c|+3),\\
\lambda &  =1+\frac{\log1024+\log(|c|+3)}{\log27-\log{4096}+2\log(|c|-2)},\\
C^{-1} &  =4pP(\max\{1,\frac{27}{32}\frac{|c|}{|c|-2}\})^{\lambda
-1}=4pP=4096(|c|+3)\sqrt{\frac{|c|+3}{|c|-2}}\ \text{if }\ |c|\geq13.
\end{align*}
Finally, Lemma \ref{l.18} and Theorem \ref{t.borka} for $p_{1}=V$, $p_{2}=Z$
and $q=U$ give us the following inequality%
\[
\frac{2}{|U|^{2}\sqrt{|c|(|c|-2)}}\geq\max\left\{  \left\vert \theta_{1}%
-\frac{V}{U}\right\vert ,\left\vert \theta_{2}-\frac{Z}{U}\right\vert
\right\}  >C|U|^{-\lambda},
\]
ie.
\[\frac{2C^{-1}}{\sqrt{|c|(|c|-2)}}>|U|^{2-\lambda}.
\]
So, if $2-\lambda>0$ the obtained upper bound for  $|U|$ is%
\begin{equation}
\log|U|<\frac{\log(\frac{2C^{-1}}{\sqrt{|c|(|c|-2)}})}{2-\lambda}.\label{e.19}%
\end{equation}
We now examine the condition $f(|c|)=2-\lambda>0$, where%
\[
f(t)=1-\frac{\log1024+\log(t+3)}{\log27-\log{4096}+2\log(t-2)},\ t>2.
\]
For $t>15$, $f(t)$ is a strictly growing function and since $\lim
_{t\rightarrow\infty}f(t)=\frac{1}{2}$, there exists $t_{0}$ such that
$f(t)>0$ for $t\geq t_{0}$. Since, $f(155\,352)>0$ and $f(155\,351)<0$, we
conclude that  the condition $2-\lambda>0$ is
fulfilled for $|c|\geq155\,352$. Now, we use the lower bound for $|U|$ given in Corollary \ref{c.16} and 
obtain%
\begin{equation}
\log|U|\geq(\sqrt{2|c|+0.25}-0.5)\log(2|c|-3),\ |c|>2.\label{e.20}%
\end{equation}
Comparing (\ref{e.19}) and (\ref{e.20}) we get the  inequality%
\[
(\sqrt{2|c|+0.25}-0.5)\log(2|c|-3)<\frac{\log(8192\cdot\frac{(|c|+3)}%
{\sqrt{|c|(|c|-2)}}\sqrt{\frac{|c|+3}{|c|-2}})}{1-\frac{\log1024(|c|+3)}%
{\log\frac{27}{{4096}}(|c|-2)^{2}}},
\]
which does not hold for $|c|\geq159\,108$. Therefore, we have proved the following assertion.

\begin{proposition}
\label{prop2}For $\left\vert c\right\vert \geq159\,108,$ the only solutions of
the system (\ref{e6}) and (\ref{e7}) are $\left(  U,V,Z\right)  =\left(
\pm\varepsilon,\pm\varepsilon,\pm\varepsilon\right)  $ with mix signs and
$\varepsilon=1,$ $i,$ $\omega,$ $\omega^{2}$ corresponding to $\mu
=1,-1,\omega,$ $\omega^{2},$ respectively.
\end{proposition}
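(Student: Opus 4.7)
The plan is to show that for $|c| \geq 159\,108$ only the trivial solutions $(U,V,Z)=(\pm\varepsilon,\pm\varepsilon,\pm\varepsilon)$ can occur by playing off a \emph{lower bound} for $|U|$ (from the congruence method) against an \emph{upper bound} for $|U|$ (from the generalized Bennett theorem). Almost all of the ingredients have been assembled in the preceding subsections, so my job is essentially to combine them and verify the resulting transcendental inequality numerically.

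First I would note that if $c\notin S_{c}$, then by applying Proposition~\ref{l.1} to each of the equations (\ref{e6}) and (\ref{e7}) the common $U$--component of any solution satisfies $U=\pm u_{m}=\pm u_{n}'$ for some $m,n\geq 0$, with $\mu\in\{1,-1,\omega,\omega^{2}\}$. The trivial case $m=n=0$ gives exactly $U=\pm\epsilon$, and then (\ref{e12}) and (\ref{e14}) force $V=\pm\epsilon$, $Z=\pm\epsilon$, i.e.\ the solutions announced in the Proposition. So it suffices to rule out $(m,n)\neq (0,0)$ for $|c|\geq 159\,108$.

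Assume for contradiction that $U\in\mathbb{Z}_{M}\setminus\{\pm\epsilon\}$ solves the system. Corollary~\ref{c.16} then yields the lower bound
\begin{equation*}
\log|U|\;\geq\;\bigl(\sqrt{2|c|+0.25}-0.5\bigr)\log(2|c|-3).
\end{equation*}
On the other hand, Lemma~\ref{l.18} shows that $V/U$ and $Z/U$ simultaneously approximate $\theta_{1}^{(1)}$ and $\theta_{2}^{(1)}$ very well, so feeding these into Theorem~\ref{t.borka} with $m=2$, $a_{1}=2$, $a_{2}=-2$, $T=c$, $M=2$ produces constants $\lambda,C$ and an upper bound
\begin{equation*}
\log|U|\;<\;\frac{\log\!\bigl(2C^{-1}/\sqrt{|c|(|c|-2)}\bigr)}{2-\lambda},
\end{equation*}
provided $L>1$ (which holds for $|c|\geq 15$) and $2-\lambda>0$. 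Here I would carefully read off the expressions for $l,L,p,P,\lambda,C$ as written out in the preceding subsection; this step is bookkeeping rather than creative work.

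The technical heart of the argument is then to show that $2-\lambda>0$ and that the resulting inequality
\begin{equation*}
\bigl(\sqrt{2|c|+0.25}-0.5\bigr)\log(2|c|-3)\;<\;\frac{\log\!\Bigl(8192\cdot\tfrac{|c|+3}{\sqrt{|c|(|c|-2)}}\sqrt{\tfrac{|c|+3}{|c|-2}}\Bigr)}{\,1-\tfrac{\log 1024(|c|+3)}{\log\frac{27}{4096}(|c|-2)^{2}}\,}
\end{equation*}
fails for all $|c|\geq 159\,108$, thereby contradicting the assumption and proving the Proposition. The function $f(t)=2-\lambda$ is strictly increasing for $t>15$ with $\lim_{t\to\infty}f(t)=\tfrac12$; a direct numerical check (e.g.\ in Mathematica) gives $f(155\,351)<0<f(155\,352)$, and the left-hand side of the combined inequality grows like $\sqrt{2t}\log(2t)$, while the right-hand side tends to $2\log t/\tfrac12$; so the failure threshold can be pinned down by a monotonicity argument plus evaluation at $|c|=159\,107$ and $|c|=159\,108$. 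I expect the main obstacle to be this last numerical/monotonicity verification: one has to exhibit the crossover cleanly, because the constants involved are large and the bounds are close. Once the inequality is certified to fail for $|c|\geq 159\,108$, the Proposition follows.
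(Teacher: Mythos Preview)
Your proposal is correct and follows essentially the same route as the paper: combine the lower bound from Corollary~\ref{c.16} with the upper bound obtained from Lemma~\ref{l.18} and Theorem~\ref{t.borka} (with the exact choices $m=2$, $a_{1}=2$, $a_{2}=-2$, $T=c$), verify $2-\lambda>0$ for $|c|\geq 155\,352$, and check numerically that the resulting inequality fails for $|c|\geq 159\,108$. One small slip: the right-hand side of your combined inequality does not grow like $4\log t$ but is in fact bounded (the numerator tends to $\log 8192$ and the denominator to $1/2$), which only makes the contradiction easier.
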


Let $\left(  p,q\right)  \in\mathbb{Z}_{M}^{2}$ be a solution of 
(\ref{e5}) and let $\left\vert c\right\vert \geq159\,108$. From Proposition
\ref{prop2} and equations in (\ref{UVZ}), we have%
\[
U=p^{2}+q^{2}=\pm\varepsilon,\text{ \ }V=p^{2}+2pq-q^{2}=\pm\varepsilon,\text{
\ }Z=-p^{2}+2pq+q^{2}=\pm\varepsilon,
\]
where $\varepsilon=1,$ $i,$ $\omega,$ $\omega^{2}.$ Adding $V$ and $Z$ yields
$2pq=0,\pm\varepsilon.$ Since $\left\vert 2pq\right\vert \geq2$ or $\left\vert
2pq\right\vert =0,$ we have $2pq=0.$ Hence, either $p$ or $q$ is equal to $0$
which implies $p^{4}=\mu$ and $q=0$\ or $q^{4}=\mu$ and $p=0$, where $\mu
\in\{1,-1,\omega,\omega^{2}\}$. Therefore the following
theorem follows immediately.

\begin{theorem}
\label{Trm :Thue} Let $c\notin S_{c}$.  If the equation (\ref{e5}) is solvable in 
$(p,q)\in\mathbb{Z}_{M}^{2},$ then $\mu\in\{1,-1,\omega,\omega^{2}\}$ where
$\omega=\frac{1}{2}(-1+\sqrt{-3})$. Furthermore, if $|c|\geq159\,108$, then all
solutions of  (\ref{e5}) are given by
\begin{enumerate}
\item $(p,q)\in\{(0,\pm1),(\pm1,0),(0,\pm i),(\pm i,0)\}\cap\mathbb{Z}_{M}%
^{2}$ if $\mu=1$;

\item $(p,q)\in\{(0,\pm\omega),(\pm\omega,0)\}\cap\mathbb{Z}_{M}^{2}$ if
$\mu=\omega$;

\item $(p,q)\in\{(0,\pm\omega^{2}),(\pm\omega^{2},0)\}\cap\mathbb{Z}_{M}^{2}$
if $\mu=\omega^{2}.$
\end{enumerate}
\end{theorem}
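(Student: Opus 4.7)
The strategy is to invoke the machinery already assembled in the preceding sections and read off $(p,q)$ from the known structure of the solution set of the simultaneous Pellian system. First, I would note that if $(p,q)\in\mathbb{Z}_M^2$ solves (\ref{e5}), then by construction the triple $(U,V,Z)$ defined via (\ref{UVZ}) solves the system (\ref{e6})--(\ref{e7}). Thus solvability of (\ref{e5}) implies solvability of both Pellian equations with the same unit $\mu$, and conversely any solution of (\ref{e5}) can be recovered from $(U,V,Z)$.

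Next, to establish the first assertion, apply Proposition \ref{l.1} separately to (\ref{e6}) with $k=c+1$ and to (\ref{e7}) with $k=c-1$. The hypothesis $c\notin S_c$ is precisely the translation of the condition $c\pm 1\notin S$ from Proposition \ref{l.1}; a direct comparison of (\ref{SC1}) with the shifted exceptional set $S$ confirms this. Hence Proposition \ref{l.1} applies, and both equations being solvable with the same $\mu$ forces $\mu\in\{1,-1,\omega,\omega^2\}$.

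For the second assertion, assume $|c|\geq 159\,108$ and invoke Proposition \ref{prop2}, which pins down the possible triples $(U,V,Z)=(\pm\varepsilon,\pm\varepsilon,\pm\varepsilon)$ (mixed signs), where $\varepsilon\in\{1,i,\omega,\omega^2\}$ corresponds to the respective $\mu$. From (\ref{UVZ}) one computes
\[
V+Z=(p^{2}+2pq-q^{2})+(-p^{2}+2pq+q^{2})=4pq,
\]
so $4pq\in\{0,\pm 2\varepsilon\}$ and hence $2pq\in\{0,\pm\varepsilon\}$. Since every nonzero element of $\mathbb{Z}_M$ has absolute value at least $1$, we have $|2pq|\geq 2$ whenever $pq\neq 0$, while $|\pm\varepsilon|=1$; this forces $pq=0$.

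It then remains to enumerate the fourth roots of $\mu$ in $\mathbb{Z}_M$: if $q=0$ then (\ref{e5}) reduces to $p^4=\mu$, and similarly with roles swapped. For $\mu=1$ the fourth roots in $\overline{\mathbb{Q}}$ are $\pm 1,\pm i$, and the intersection with $\mathbb{Z}_M$ gives $\{\pm 1,\pm i\}\cap\mathbb{Z}_M$. For $\mu=\omega$, using $\omega^4=\omega$ one finds the roots are $\pm\omega$ together with $\pm e^{\pi i/6}$; the latter lie in $\mathbb{Q}(\sqrt{3},i)$, a degree-$4$ extension, hence not in any imaginary quadratic $M$, so only $\pm\omega$ survive. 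The case $\mu=\omega^2$ is analogous. Finally, for $\mu=-1$ the fourth roots are the primitive eighth roots of unity $\pm e^{\pm i\pi/4}$, none of which lie in any imaginary quadratic field, so no solutions arise, explaining the absence of $\mu=-1$ from the enumeration. The only conceptually delicate step is this last root-counting, which turns on the simple observation that $\mathbb{Q}(\sqrt{-D})$ contains no primitive eighth root of unity; everything else is mere bookkeeping built on the heavy results already in place.
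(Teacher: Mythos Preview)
Your proof is correct and follows essentially the same approach as the paper: reduce to the Pellian system via (\ref{UVZ}), apply Proposition~\ref{l.1} to each equation to constrain $\mu$, invoke Proposition~\ref{prop2} for $|c|\geq 159\,108$, add $V$ and $Z$ to force $pq=0$, and read off the fourth roots of $\mu$. Your explicit justification that primitive eighth roots of unity and $\pm e^{\pm i\pi/6}$ cannot lie in any imaginary quadratic field is a welcome bit of detail that the paper leaves implicit.
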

Note, that if $\mu=-1$ and $|c|\geq
159\,108$, then  there is no solution of (\ref{e5}).
Equations in (\ref{e4}) and Theorem \ref{Trm :Thue} imply the following proposition right away. 

\begin{proposition}
\label{PropCjv}If $|c|\geq159\,108,$ then all non-equivalent generators of
power integral basis of $\mathcal{O=}\mathbb{Z}_{M}\left[  \xi\right]  $ over
$\mathbb{Z}_{M}$ are $\alpha=\xi,$\ $2\xi-2c\xi^{2}+\xi^{3}$.
\end{proposition}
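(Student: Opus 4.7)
The plan is to apply Theorem \ref{Trm :Thue} to enumerate the primitive solutions of the Thue equation (\ref{e5}) and then use (\ref{e4}) to recover the generators, taking care of the scaling that arises when the $(p,q)$-pair associated with a given generator is non-primitive. Starting from any generator $\alpha = x\xi + y\xi^{2} + z\xi^{3} \in \mathcal{O}$, Sections \ref{s.PRe}--\ref{dio.311} furnish $(p, q) := (x - 2z, y) \in \mathbb{Z}_M^{2}$ and $k := p - 2cq \in \mathbb{Z}_M$, satisfying (\ref{e5}) with $\mu = k^{2}\eta$ and $\eta := Q_{1}(x, y, z)$ a unit in $\mathbb{Z}_M$.

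First I would write $(p, q) = \lambda(p_{0}, q_{0})$ with $(p_{0}, q_{0})$ primitive in $\mathbb{Z}_M^{2}$ and $\lambda \in \mathbb{Z}_M$; setting $k_{0} := p_{0} - 2cq_{0}$ one has $k = \lambda k_{0}$ and $T(p_{0}, q_{0}) = k_{0}^{2}\eta/\lambda^{2}$. The integrality condition $k \mid q^{2}$ (equivalent to $r = q^{2}/k \in \mathbb{Z}_M$) together with the coprimality $\gcd(k_{0}, q_{0}) = 1$ (inherited from $\gcd(p_{0}, q_{0}) = 1$) forces $k_{0} \mid \lambda$; writing $m := \lambda/k_{0} \in \mathbb{Z}_M$, the integrality of $T(p_{0}, q_{0}) = \eta/m^{2}$ together with $\eta$ being a unit then forces $m$ to be a unit. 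Consequently $T(p_{0}, q_{0})$ is a unit and, by Theorem \ref{Trm :Thue} (valid for $\lvert c\rvert \geq 159\,108$), $(p_{0}, q_{0})$ lies in $\{(\pm\epsilon, 0), (0, \pm\epsilon)\}$ with $\epsilon \in \{1, i, \omega, \omega^{2}\} \cap \mathbb{Z}_M$.

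I would then substitute each family into (\ref{e4}). For $(p_{0}, q_{0}) = (\pm\epsilon, 0)$, $k_{0} = \pm\epsilon$ is a unit, so $\lambda$ is a unit and $(x, y, z) = (\pm\lambda\epsilon, 0, 0)$ gives $\alpha \sim \xi$. For $(p_{0}, q_{0}) = (0, \pm\epsilon)$, $k_{0} = \mp 2c\epsilon$ is not a unit, so $\lambda = 2c \cdot (\mathrm{unit})$, and the resulting $(x, y, z) = (\mp 2\epsilon, \pm 2c\epsilon, \mp\epsilon)$ (up to a unit factor) gives $\alpha \sim 2\xi - 2c\xi^{2} + \xi^{3}$. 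The main obstacle is the reduction to the primitive case above, in particular the coprimality-and-integrality argument showing that $\lambda$ and $k_{0}$ must be associates; once this is in place, the two primitive families exhaust the possible generators and the proposition follows.
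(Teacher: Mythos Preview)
Your overall strategy matches the paper's one-line proof (apply Theorem~\ref{Trm :Thue} and read off the generators from \eqref{e4}), and you are right to flag the scaling issue: for the second generator one has $(p,q)=(x-2z,y)=(0,-2c)$ and $k=4c^{2}$, so $k$ is \emph{not} a unit, contrary to the terse claim in Section~\ref{dio.311}. Your reduction to a primitive pair is the natural way to repair this, and in a PID your coprimality/integrality chain ($k\mid q^{2}$, $\gcd(k_{0},q_{0})=1\Rightarrow k_{0}\mid\lambda$, hence $m=\lambda/k_{0}$ a unit) is correct.

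The genuine gap is that the very first step, ``write $(p,q)=\lambda(p_{0},q_{0})$ with $(p_{0},q_{0})$ primitive and $\lambda\in\mathbb{Z}_{M}$'', presupposes that $\mathbb{Z}_{M}$ is a B\'ezout (equivalently, principal ideal) domain: if the ideal $(p,q)$ is non-principal, no such $\lambda$ exists, and for all but the nine imaginary quadratic fields of class number one this situation can occur. All subsequent gcd manipulations inherit the same defect. A clean way around this, valid for every $D$, is to bypass the Thue equation entirely: from \eqref{e4} one checks directly that
\[
\frac{U}{k}=x+2cy+(4c^{2}-1)z,\qquad \frac{V}{k}=x+(2c+2)y+(4c^{2}+4c-3)z,\qquad \frac{Z}{k}=-x+(2-2c)y+(3+4c-4c^{2})z
\]
all lie in $\mathbb{Z}_{M}$ and satisfy \eqref{e6}--\eqref{e7} with $\mu=\eta$ a unit; Proposition~\ref{prop2} then forces $(U/k,V/k,Z/k)=(\pm\varepsilon,\pm\varepsilon,\pm\varepsilon)$, and solving the resulting linear system in $(x,y,z)$ (using also $Q_{2}=0$) yields exactly $(x,y,z)\sim(1,0,0)$ or $(2,-2c,1)$, with no appeal to unique factorization.
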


\begin{remark}
\label{Rem1}Note that $\left(  U,V,Z\right)  =\left(  \pm\varepsilon
,\pm\varepsilon,\pm\varepsilon\right)  $ with mix signs and $\varepsilon=1,$
$i,$ $\omega,$ $\omega^{2}$ corresponding to $\mu=1,-1,\omega,$ $\omega^{2},$
respectively, are solutions of the system (\ref{e6}) and (\ref{e7}) for all
$c\in\mathbb{Z}_{M}.$ This implies that $\alpha=\xi,$\ $2\xi-2c\xi^{2}+\xi
^{3}$ are non-equivalent generators of power integral basis for all
$c\in\mathbb{Z}_{M}\backslash\left\{  0,\pm2\right\}.$
\end{remark}

\section{Applying Baker's theory for $|c|<159\,108$ }\label{sec:BT}

 The famous theorem of Baker and W\"{u}stholz from \cite{BW} says:
\begin{theorem}\label{l.bw} If $\Lambda = b_1\alpha_1 + \cdots+  b_l\alpha_l\not= 0$, where $\alpha_1 , \ldots,\alpha_l$ are algebraic integers and
$b_1, \ldots , b_l$ are rational integers, then
$$\log |\Lambda| \geq-18(l + 1)!l^{l+1}(32d)^{l+2}h′(\alpha_1)\cdots h′(\alpha_l) \log(2ld)\log B,$$
where $B =\max\{|(b_1|, \ldots , |b_l|\}$, $d$ is the degree of the number field generated by $\alpha_1, \ldots ,\alpha_l$
over the rationals $\Q$,
$$ h'(\alpha) = \max\{ h(\alpha),\frac{1}{d}|\log \alpha|,\frac{1}{d}\},$$
and $h(\alpha)$ denotes the standard logarithmic Weil height.
\end{theorem}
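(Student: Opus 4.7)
The statement above is the celebrated lower bound of Baker and W\"{u}stholz for linear forms in logarithms of algebraic numbers, quoted verbatim from \cite{BW}; it is not something that would be proved from scratch in this paper but rather invoked as a black box. If one were to reconstruct the proof, the plan would be to follow the Gelfond--Baker strategy in the refined form developed by Baker and W\"{u}stholz. Broadly: reformulate $\Lambda$ as a linear combination of logarithms of algebraic numbers, then, arguing by contradiction, assume $|\Lambda|$ is smaller than the asserted bound.

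The first substantive step is the construction of an auxiliary function $F$, typically an integer polynomial combination of the exponentials $e^{\alpha_j}$ evaluated at a lattice of points, with coefficients chosen by Dirichlet's box principle so that $F$ vanishes to high prescribed order on a carefully chosen finite set. The counting condition that makes this possible is purely linear-algebraic: the dimension of the space of admissible coefficients must exceed the number of vanishing conditions, and one keeps track of the sizes of coefficients via a Thue--Siegel lemma. The second step is the \emph{extrapolation}: using Cauchy's integral formula (in the archimedean case) or Schwarz-type lemmas, one propagates the smallness of $F$ onto a much larger set, exploiting the hypothesis that $|\Lambda|$ is tiny to show that $F$ must be analytically extremely small there. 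The third step confronts this analytic smallness with an arithmetic Liouville-type lower bound for nonzero algebraic values, producing a contradiction once the free parameters (degree, multiplicity, extrapolation radius) are optimised against $l$, $d$, the $h'(\alpha_j)$, and $\log B$.

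The principal obstacle, and the actual contribution of \cite{BW} over earlier versions of Baker's theorem, is the explicit form of the constants: isolating the factor $(32d)^{l+2}$ and the mild $\log(2ld)$ term requires replacing the classical auxiliary-function construction with W\"{u}stholz's analytic subgroup theorem, which absorbs a great deal of the combinatorial bookkeeping into a conceptual algebraic-geometric framework. Reconstructing these constants is what would be genuinely hard; the qualitative shape of the bound is comparatively routine.

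For the present paper the theorem will be applied in Section~\ref{sec:BT} to a linear form $\Lambda$ attached to the Pellian system (\ref{e6})--(\ref{e7}) built from the sequences $(u_m)$ and $(u_n^{\prime})$, so the only thing that remains to be verified in order to legitimately invoke the bound is the non-vanishing $\Lambda \ne 0$, which, as the introduction flags, is the delicate case analysis; once that is in place the theorem supplies an upper bound for $\max\{m,n\}$ which the Baker--Davenport reduction will subsequently shrink to the computationally tractable range $|c| \leq 200$.
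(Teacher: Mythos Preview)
Your assessment is correct and matches the paper exactly: Theorem~\ref{l.bw} is stated there as ``The famous theorem of Baker and W\"{u}stholz from \cite{BW}'' with no proof given, and is invoked purely as a black box in Section~\ref{sec:BT}. Your sketch of the Gelfond--Baker strategy and the role of W\"{u}stholz's analytic subgroup theorem is accurate but goes well beyond what the paper does; the paper's own ``proof'' consists solely of the citation, and your observation that the only local obligation is verifying $\Lambda\neq 0$ (handled in the paper's Subsection~6.2 via the case analysis on the basis of $span(1,\alpha,\overline{\alpha},|\alpha|^{2},\beta,\overline{\beta},|\beta|^{2})$) is exactly right.
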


The previous theorem can be applied on finding intersections of binary recursive sequences close to sequences of the form $\alpha\beta^m$, where $\alpha,\beta$ are algebraic integers. So, first we make sure that our sequences $u_m$ and $u_n'$ are close to that form.

Assume that $|c|<159\,108$, $c\notin S_{c}$ and $\on{Re}(c)\geq 0$. Indeed, if $\on{Re}(c)<0$, then by replacing $c$ in the system of equations (\ref{e6}), (\ref{e7}) by $-c$, we obtain the system
$$ (c -2)U^2 - cV^2 = -2\mu,\ cZ^2 - (c + 2) U^2 =-2\mu,$$
which corresponds to the initial system (\ref{e6}), (\ref{e7}) by switching places of $Z$ and $V$. Let us agree that the square root of a complex number $z = re^{i\varphi}$, $-\pi<\varphi\leq\pi$  is given by
$$ \sqrt{z}=\sqrt{r}e^{i\frac{\varphi}{2}},$$
ie. the one with a positive real part (or the principal square root).

In Section \ref{dio.311} we showed that $U=\pm u_m$, where $U$ is the solution of (\ref{e6}) and the sequence $(u_m)$ is given by (\ref{e12}). Solving the recursion in (\ref{e12}) yields an explicit expression for $u_m$:
\begin{eqnarray}
u_m=\epsilon\frac{1}{2\sqrt{c(c+2)}}\left((c+\sqrt{c(c+2)})(c+1+\sqrt{c(c+2)})^m\right. \nonumber \\ \left.-(c-\sqrt{c(c+2)})(c+1-\sqrt{c(c+2)})^m \right).\label{e.98}
\end{eqnarray}
Since $Re(c)\geq 0$, $|c+1+\sqrt{c(c+2)}|\cdot|c+1-\sqrt{c(c+2)}|=1$ and $|c+1+\sqrt{c(c+2)}|\not=|c+1-\sqrt{c(c+2)}|$ for $c\not=0,-1,-2$, we have $|c+1+\sqrt{c(c+2)}|> 1$ (and $|c+1-\sqrt{c(c+2)}|< 1$). So, we put
\begin{equation}\label{e.100p}
P=\frac{1}{\sqrt{c+2}}(c+\sqrt{c(c+2)})(c+1+\sqrt{c(c+2)})^m.
\end{equation}
Through algebraic manipulation it can be shown
\begin{equation}\label{e90} u_m =\frac{\epsilon}{2\sqrt{c}}(P +\frac{2c}{c+1}P^{-1}), \end{equation}
since $\sqrt{c (c + 2)}= \sqrt{c}\sqrt{(c + 2)}$ if $\on{Re}(c)\geq 0$ (because $\sqrt{z_1z_2}=\sqrt{z_1}\sqrt{z_2}$ is not true in general) and
$$ P^{-1}=\frac{\sqrt{c + 2}}{2c}(\sqrt{c(c+2)}-c)(c+1-\sqrt{c(c+2)})^m.$$
Analogously, $U=\pm u'_n$ is the solution of (\ref{e7}) where the sequence $(u'_n)$ is given by (\ref{e14}) and it's explicit expression is
\begin{eqnarray}
u'_n=\epsilon\frac{1}{2\sqrt{c(c-2)}}\left((c+\sqrt{c(c-2)})(c-1+\sqrt{c(c-2)})^n\right. \nonumber \\ \left.-(c-\sqrt{c(c-2)})(c-1-\sqrt{c(c-2)})^n \right). \label{e.99}
\end{eqnarray}
Also, since $|c-1+\sqrt{c(c-2)}|\not=|c-1-\sqrt{c(c-2)}|$ for $c\not=0,1,2$ and $|c-1+\sqrt{c(c-2)}|\cdot|c-1-\sqrt{c(c-2)}|=1$, we put
\begin{equation}\label{e100}
 Q=\frac{1}{\sqrt{c-2}}(c+\sqrt{c(c-2)})(c-1+\sqrt{c(c-2)})^n,
 \end{equation}
if $|c-1+\sqrt{c(c-2)}|>1$. Alternatively, if $|c-1+\sqrt{c(c-2)}|<1$, ie. $|c-1-\sqrt{c(c-2)}|>1$, we put
\begin{equation}\label{e101} Q=\frac{1}{\sqrt{c-2}}(c-\sqrt{c(c-2)})(c-1-\sqrt{c(c-2)})^n.
\end{equation}
To be more precise, if $\operatorname{Re}(c)>1$ or $\operatorname{Re}(c)=1$
and $\operatorname{Im}(c)>0$, then $Q$ is given by (\ref{e100}) and also
$\sqrt{c(c-2)}=\sqrt{c}\sqrt{c-2}.$ In the other hand, if $0\leq
\operatorname{Re}(c)<1$ or $\operatorname{Re}(c)=1$ and $\operatorname{Im}
(c)<0$, then $\sqrt{c(c-2)}=-\sqrt{c}\sqrt{c-2}$ and $Q$ is defined by
(\ref{e101}). Note that, in both cases, $Q$ can be given by
\begin{equation}\label{e104} Q=\frac{1}{\sqrt{c-2}}(c+\sqrt{c}\sqrt{c-2})(c-1+\sqrt{c}\sqrt{c-2})^n.\end{equation}
Similarly to the previous case, we have
\begin{equation}\label{e92} u'_n=\pm\frac{\epsilon}{2\sqrt{c}}(Q-\frac{2c}{c-2}Q^{-1}),\end{equation}
where
$$ Q^{-1}=\frac{\sqrt{c-2}}{2c}(c-\sqrt{c}\sqrt{c-2})(c-1-\sqrt{c}\sqrt{c-2})^n.$$

The theorem of Baker and W\"{u}stholz (Theorem \ref{l.bw}) will be applied on the form
$$ \Lambda=\log\frac{|Q|}{|P|}. $$

\subsection{Estimates on $ \left|\Lambda\right|$ } \label{s.41}

First, we have to estimate the lower bounds for $|P|$ and $|Q|$. Since $|c|\not=1$, then $|c|\geq\sqrt{2}$ and
\begin{equation}\label{e102} |P|\geq 11.6,\end{equation}
for  $m\geq 2$. Indeed, the inequality (\ref{e102}) follows from the fact that
$$\left\vert \frac{c+\sqrt{c(c+2)}}{\sqrt{c+2}}\right\vert =\left\vert \sqrt
{c}\right\vert \left\vert \frac{\sqrt{c}}{\sqrt{c+2}}+1\right\vert \geq
1\cdot1=1, $$
since $\left\vert \sqrt{c}\right\vert =\sqrt{|c|}\geq1$, $\operatorname{Re}
\left(  \frac{\sqrt{c}}{\sqrt{c+2}}\right)  \geq0$ and
$$\left|c+1+\sqrt{c(c+2)}\right|^2\geq (\left|c\right|+2)^2\geq(\sqrt{2}+2)^2\geq 11.6. $$
Similarly, if $|c|\geq \sqrt{2}$ and $n\geq 2$, then (\ref{e104}) implies
\begin{equation}\label{e103}
|Q|\geq\left\vert \sqrt{c}\right\vert \left\vert \frac{\sqrt{c}}{\sqrt{c-2}
}+1\right\vert \left\vert c-1+\sqrt{c}\sqrt{c-2}\right\vert ^{2}\geq
\left(  \sqrt
{2}+1\right)  ^{2}>5.8. \end{equation}
In the case $|c|\geq 2$, (\ref{e102}) and (\ref{e103}) can be  immediately improved to
$$ |P|\geq 16,\ |Q|\geq 9. $$
Since there are finitely many integers $c$ such that $\sqrt{2}\leq|c|<2$, we can easily obtain  lower bounds for $|P|$ and $|Q|$ assuming $|c|\geq\sqrt{2}$. Indeed,
$$ |P|\geq \min\{16,\min\{P_c\,:\,c\in T\}\},\
|Q|\geq \min\{9,\min\{Q_c\,:\,c\in T\}\}$$
where
$$P_c=\left|\sqrt{\frac{c}{c+2}}\right|\left|\sqrt{c}+\sqrt{c+2}\right|\left|c+1+\sqrt{c(c+2)}\right|^2,$$
$$Q_c=\left|\sqrt{\frac{c}{c-2}}\right|\left|\sqrt{c}\pm\sqrt{c-2}\right|\left|c-1+\sqrt{c}\sqrt{c-2}\right|^2,$$
and
{\small
\begin{equation}\label{e1230}
T=\{1 \pm \sqrt{-1},\pm \sqrt{-2}, \frac{1\pm \sqrt{-7}}{2},1\pm \sqrt{-2}, \pm \sqrt{-3}, \frac{3\pm \sqrt{-3}}{2}, \frac{1\pm\sqrt{-11}}{2}\},\end{equation}}
i.e. $T$ is the set of all integers in $\Q(\sqrt{-D})$ such that $\sqrt{2}\leq|c|<2$, $c\not\in S_c$ and $\on{Re}(c)\geq0$. Finally, we get
$$  |P|\geq 16, |Q|\geq 9, $$
for $|c|\geq\sqrt{2}$ and $n,m\geq 2$ (because $\min\{P_c\,:\,c\in T\}\geq 16$ and $\min\{Q_c\,:\,c\in T\}\geq 9$). Using these bounds, we have to show that the value of $ \left|\log\frac{|Q|}{|P|}\right|$ is small enough.
Assuming that
$$ u_m=\pm u'_n, \ m\geq 2,n\geq 2, $$
relations (\ref{e90}) and (\ref{e92}) imply
\begin{equation}\label{e.111}
 P \pm Q= \pm\frac{2c}{c-2}Q^{-1}+\frac{2c}{c+2}P^{-1}.
\end{equation}
So,
\begin{eqnarray}\label{e111}
||P| -| Q||\leq|P \mp Q|&\leq&\left|\frac{2c}{c-2}\right||Q|^{-1}+\left|\frac{2c}{c+2}\right||P|^{-1}\\ \notag
&<& 2\cdot 5\cdot\frac{1}{9}+2\cdot 1\cdot\frac{1}{16} =1.24.
\end{eqnarray}
(Note that $\left|\frac{c}{c-2}\right|\leq 5$ for $c\in\Z_M$, $c\not=2$, $\on{Re}(c)\geq0$, and $\left|\frac{c}{c+2}\right|\leq 1$ for $c\in\Z_M$ and $\on{Re}(c)\geq0$.)
Since,
$$ \frac{||P| -| Q||}{|P|}<1.24|P|^{-1}<1, $$
we have
$$ \left|\log\frac{|Q|}{|P|}\right|=\log\left|1-\frac{|P|-|Q|}{|P|}\right|\leq \frac{||P| -| Q||}{|P|}+\left(\frac{||P| -| Q||}{|P|}\right)^2.$$
Also, the inequality $||P| -| Q||<1.24$ implies that
$$|P|<| Q|+1.24\leq| Q|+1.24\frac{| Q|}{9}<1.14| Q|,$$
or equivalently
$$|Q|^{-1}<1.14| P|^{-1}.$$
 By putting that into (\ref{e111}), we get
$$ ||P| -| Q||<\left|\frac{2c}{c-2}\right|\cdot 1.14| P|^{-1}+\left|\frac{2c}{c+2}\right||P|^{-1}<13.4|P|^{-1}.$$
Finally,
$$ \left|\log\frac{|Q|}{|P|}\right|<13.4|P|^{-2}+(13.4|P|^{-2})^2 \leq(13.4+(13.4\frac{1}{16})^2)|P|^{-2}<14.11|P|^{-2}.$$
Furthermore it can be shown that
$$  |\Lambda|=\left|\log\frac{|Q|}{|P|}\right|<14.11|P|^{-2}<14.11\cdot\left(  \sqrt{2}+2\right)  ^{-2m}<3^{-m},$$
for $|c|\geq\sqrt{2}$ and $m,n\geq 2$.
Similarly we find
$$ \frac{||P|-|Q||}{|Q|}<1.24|Q|^{-1}<1 $$
and
\begin{eqnarray*}
|\Lambda | &=&\left\vert \log 
\frac{|P|}{|Q|}\right\vert =\log \left\vert 1-\frac{|P|-|Q|}{|Q|}\right\vert
\leq \frac{||P|-|Q||}{|Q|}+\left( \frac{||P|-|Q||}{|Q|}\right) ^{2} \\
&<&14|Q|^{-2}<14\cdot \left( \sqrt{2}+1\right) ^{-2n}<\left( 1.55\right)
^{-n}.
\end{eqnarray*}

It remains to show that there are no solutions in cases when $m=1$ or $n=1.$
For $c\in\Z_M$, $|c|\geq\sqrt{2}$ and $\operatorname{Re}(c)\geq0$, we have
$$|u_{1}|=|2c+1|\leq2|c|+1,\ \ |u_{1}^{\prime}|=|2c-1|\leq2|c|+1$$
and
$$
|u_{m}|>(2\sqrt{1+|c|^{2}}-1)^{m},\ \ \ |u_{n}^{\prime}|>(2\sqrt{1+|c|^{2}
}-1)^{n-1}(2|c|-1),$$
for $m,n\geq2$ (where last inequalities are obtained similarly to those in
Lemma \ref{l.13}). Since,
\[
|u_{1}|\leq2|c|+1<(2\sqrt{1+|c|^{2}}-1)(2|c|-1)\leq|u_{n}^{\prime}|,\text{
if\ }n\geq2
\]
\[
|u_{1}^{\prime}|\leq2|c|+1<(2\sqrt{1+|c|^{2}}-1)^{2}\leq|u_{m}|,\text{
if\ }m\geq2
\]
we conclude that the equations $u_{1}=\pm u_{n}^{\prime}$ and $u_{1}^{\prime
}=\pm u_{m}$ have no solution for $m,n\geq2$, i.e. for $m,n\geq1$ (because
$u_{1}\not =\pm u_{1}^{\prime}$).

\subsection{The condition $\Lambda\not=0$}

Without proving that $\Lambda\not=0$, i.e. $|P|\not=|Q|$, we cannot apply Theorem \ref{l.bw}. This proof is rather complicated and involves several cases.

First, we show that $P\not=\pm Q$. 
 Let us assume that $P=\pm Q$. According to (\ref{e.111}),  the following possibilities may occur:
$$\ds{\frac{c}{c^2-4}=0},$$
 which is obviously not possible ($c\not=0,\pm2$), and
 $$\ds{P^2=\frac{2c}{c^2-4}}. $$
In Section \ref{s.41}, we have shown that $|P|^2\geq 16^2$. Since
$$ \left|\frac{2c}{c^2-4}\right|\leq\frac{2|c|}{||c|^2-4|}\leq\frac{2\cdot\sqrt{5}}{5-4}<5, $$
for $|c|\geq\sqrt{5}$ 
$$ \left|\frac{2c}{c^2-4}\right|\leq \max\{\left|\frac{2c}{c^2-4}\right|\,:\,c\in T_1\}<1, \ c\in T_{1}, $$
where $T_{1}=\{c\in\mathbb{Z}_{M}\backslash S_{c}:\,\sqrt{2}\leq|c|\leq2\}$, ie.
$$
T_{1}=T\cup
\left\{  \pm2\sqrt{-1},\frac{3\pm\sqrt{-7}}{2},\frac{1\pm\sqrt{-15}}{2}%
,1\pm\sqrt{-3}\right\}$$
and $T$ is given in (\ref{e1230}). Hence, a contradiction is reached.\\

Before presenting other cases, let us take a closer look at $|P|$ and $|Q|$ from an algebraic point of view. According to (\ref{e.100p}), we have
\begin{equation}\label{Pc}
\frac{ P}{\sqrt{c}}=\frac{c+\sqrt{c(c+2)}}{\sqrt{c(c+2)}}(c+1+\sqrt{c(c+2)})^m=a+b\alpha=a+\frac{b_1}{c+2}\alpha,\end{equation}
where $\alpha=\sqrt{c(c+2)}$ is an algebraic integer in $\Q(\sqrt{-D})$ and $a,b_1\in \Z_M$. Similarly, (\ref{e100}) and (\ref{e101}) imply that
\begin{equation}\label{Qc}
\frac{ Q}{\sqrt{c}}=d+e\beta=d+\frac{e_1}{c-2}\beta,\end{equation}
where $\beta=\sqrt{c(c-2)}$ is an algebraic integer in $\Q(\sqrt{-D})$ and $d,e_1\in \Z_M$.  It follows straight away that
\begin{eqnarray*}
u_m&=&\frac{\epsilon}{2}(a+b\alpha+a-b\alpha)=\epsilon a,\\
u'_n&=&\frac{\epsilon}{2}(d+e\beta+d-e\beta)=\epsilon d ,
\end{eqnarray*}
where we have used the  explicit expressions (\ref{e.98}) and (\ref{e.99}) for $u_m$ and $u'_n$ . Since $u_m=\pm u'_n$, we get
$$ a=\pm d.$$
Note that $a\not=0$, $d\not=0$, because $|u_m|,|u'_n|>0$ for $m,n\geq2$.  We have
$$ \left|\frac{ P}{\sqrt{c}} \right|^2=(a+b\alpha)(\ol{a}+\ol{b}\ol{\alpha})=|a|^2+(\ol{a}b)\alpha+(a\ol{b})\ol{\alpha}+|b|^2|\alpha|^2$$
and analogously
$$ \left|\frac{Q}{\sqrt{c}} \right|^2=(d+e\alpha)(\ol{d}+\ol{e}\ol{\beta})=|d|^2+(\ol{d}e)\beta+(d\ol{e})\ol{\beta}+|e|^2|\beta|^2.$$
If the algebraic extension $\Q(\sqrt{-D})(\alpha,\ol{\alpha},\beta,\ol{\beta})$ is considered as a vector space over $\Q(\sqrt{-D})$, then $\{1,\alpha,\ol{\alpha},|\alpha|^2,\beta,\ol{\beta},|\beta|^2\}$ is its generating set and $\left|\frac{ P}{\sqrt{c}} \right|^2$ and $\left|\frac{ Q}{\sqrt{c}} \right|^2$ are  the elements of the vector subspaces $span(1,\alpha,\ol{\alpha},|\alpha|^2)$ and $span(1,\beta,\ol{\beta},|\beta|^2)$, respectively. Before continuing with the proof, we establish   the following useful claims:
\begin{description}
\item[(i)] \emph{If $c\not\in\{0,\pm 1,\pm 2\}$, then $\alpha,\beta\not\in \Q(\sqrt{-D})$.}

 Indeed, we can show that $\alpha\in\mathbb{Q}(\sqrt{-D})$ if and only if
$c=0,-1,-2$. Let $\alpha\in\mathbb{Q}(\sqrt{-D}).$ Note that $\alpha
\in\mathbb{Q}(\sqrt{-D})$ if and only if $c(c+2)=t^{2}$ for some
$t\in\mathbb{Z}_{M}$. Therefore, $c=-1\pm\sqrt{t^{2}+1}$, where $t^{2}%
+1=s^{2}$ for some $s\in\mathbb{Z}_{M}$. Note that $t,s,c\in\mathbb{Z}_{M}$
and $t\pm s$ are units in $\mathbb{Z}_{M}$. It is easy to check that only
possibilities are $c=0,-1,-2$.

It can be proved similarly that  $\beta\in \Q(\sqrt{-D})$ if and only if $c = 0,1,2$.

\item[(ii)] \emph{If $B_{1}$ is a basis of the subspace $span(1,\alpha
,\overline{\alpha},|\alpha|^{2})$, then $B_{1}=\{1,\alpha,\overline{\alpha
},|\alpha|^{2}\}$ or $B_{1}=\{1,\alpha\}$. Set $\{1,\alpha\}$ is a basis of
$span(1,\alpha,\overline{\alpha},|\alpha|^{2})$ if and only if $\overline
{\alpha}=K\alpha$, $K\in\mathbb{Q}(\sqrt{-D})$. The analogous statement is
true for a basis of $span(1,\beta,\overline{\beta},|\beta|^{2})$.}

According to (i), it is obvious that $\{1,\alpha\}$ is a linearly independent
set. Let $\overline{\alpha}=A+C\alpha$, for $A,C\in\mathbb{Q}(\sqrt{-D})$. By
squaring it, we obtain $2AC\alpha=\overline{\alpha}^{2}-A^{2}-\alpha^{2}%
C^{2}\in\mathbb{Q}(\sqrt{-D})$. Since $\alpha\not \in \mathbb{Q}(\sqrt{-D})$,
we have that $AC=0$. If $C=0$, then $\overline{\alpha}=A\in\mathbb{Q}%
(\sqrt{-D})$, a contradiction. If $A=0$, then $\overline{\alpha}=C\alpha$ and
$|\alpha|^{2}=C\alpha^{2}\in\mathbb{Q}(\sqrt{-D})$ imply $B_1=\{1,\alpha\}$.

If $\{1,\alpha,\overline{\alpha}\}$ is a linearly independent set and
$|\alpha|^{2}=A+C\alpha+E\overline{\alpha}$ for $A,C,E\in\mathbb{Q}(\sqrt
{-D})$, then by squaring it we get
\[
\underbrace{|\alpha|^{4}}_{\in\mathbb{Q}(\sqrt{-D})}=\underbrace{A^{2}%
+C^{2}\alpha^{2}+E^{2}\overline{\alpha}^{2}}_{\in\mathbb{Q}(\sqrt{-D}%
)}+2AC\alpha+2AE\overline{\alpha}+2CE\underbrace{(A+C\alpha+E\overline{\alpha
})}_{|\alpha|^{2}},
\]
a linear combination of $1,\alpha,\overline{\alpha}$. So, $C(A+E)=0$ and
$E(A+C)=0$. If $A=C=0$, then $|\alpha|^{2}=E\overline{\alpha}$ implies
$\alpha\in\mathbb{Q}(\sqrt{-D})$, a contradiction. Also, other two cases end
with a contradiction ( $A=E=0$ implies $\overline{\alpha}\in\mathbb{Q}%
(\sqrt{-D})$ and $C=E=0$ implies that $\{\alpha,\overline{\alpha}\}$ is a
linearly dependent set).

\item[(iii)] \emph{Let }$c\not \in \{0,\pm1,\pm2\}.$\emph{ If }$\beta\in
span(1,\alpha,\overline{\alpha},|\alpha|^{2})$\emph{, then }$B_{1}%
=\{1,\alpha,\overline{\alpha},|\alpha|^{2}\}$\emph{ is a basis of the subspace
}$span(1,\alpha,\overline{\alpha},|\alpha|^{2})$\emph{ and }$\beta
=K\overline{\alpha}$\emph{ or }$\beta=K\left\vert \alpha\right\vert ^{2}%
$\emph{, for some }$K\in\mathbb{Q}(\sqrt{-D})$\emph{. The analogous statement
is true if }$\alpha\in$\emph{$span(1,\beta,\overline{\beta},|\beta|^{2})$.}

\item Let $\beta\in span(1,\alpha,\overline{\alpha},|\alpha|^{2}).$ Obviously,
this implies $\overline{\beta},$ $|\beta|^{2}\in span(1,\alpha,\overline
{\alpha},|\alpha|^{2})$ too. If we assume that $\beta=K\alpha$ for some
$K\in\mathbb{Q}(\sqrt{-D})$, then $K=\pm\frac{\sqrt{c^{2}-4}}{c-2}$.
Therefore, $c^{2}-4=r^{2}$ for some $r\in\mathbb{Z}_{M}$. Since $c,r\in
\mathbb{Z}_{M}$ and $|c\pm r|\leq4$, by checking all possibilities, we find
$c=0,\pm1,-2.$ (Similarly, if $\beta=K\alpha$ for $K\in\mathbb{Q}(\sqrt{-D})$,
then $c=0,\pm1,2$).

\item If $B_{1}=\{1,\alpha\}$ is basis of subspace $span(1,\alpha
,\overline{\alpha},|\alpha|^{2}),$ then $\beta=L+K\alpha$ some $L,K\in
\mathbb{Q}(\sqrt{-D}).$ Then, by squaring it, it is easy to see $\beta
=L\in\mathbb{Q}(\sqrt{-D})$ or $\beta=K\alpha,$ which is impossible.

\item If $B_{1}=\{1,\alpha,\overline{\alpha},|\alpha|^{2}\}$ is basis of
subspace $span(1,\alpha,\overline{\alpha},|\alpha|^{2}),$ then $\beta
=L+K\alpha+K^{\prime}\overline{\alpha}+K^{\prime\prime}\left\vert
\alpha\right\vert ^{2}$ for some $L,K,K^{\prime},K^{\prime\prime}\in
\mathbb{Q}(\sqrt{-D}).$ Similarly as before, we obtain $\beta\in
\mathbb{Q}(\sqrt{-D})$ or $\beta=K\alpha\ $or $\beta=K^{\prime}\overline
{\alpha}$ or $\beta=K^{\prime\prime}\left\vert \alpha\right\vert ^{2}.$
Therefore, we might have $\beta=K^{\prime}\overline{\alpha}$ or $\beta
=K^{\prime\prime}\left\vert \alpha\right\vert ^{2}$, since first two cases are impossible.\ 
\end{description}

Furthermore,
\[
\left|  \frac{ P}{\sqrt{c}} \right|  ^{2}-\left|  \frac{ Q}{\sqrt{c}} \right|
^{2}\ \in\ V=span(1,\alpha,\overline{\alpha},|\alpha|^{2},\beta,\overline
{\beta},|\beta|^{2}).
\]
There are several possibilities for choosing a basis $B$ for $V$ from its
generating set $\{1,\alpha,\overline{\alpha},|\alpha|^{2},\beta,\overline
{\beta},|\beta|^{2}\}$:

\begin{description}
\item[(a)] $B=\{1\}$. This happens if and only if $\alpha,\beta\in
\mathbb{Q}(\sqrt{-D})$. So, this is not possible.

\item[(b)] $B=\{1,\alpha\}$ (or $B=\{1,\beta\}$). This is also not possible.
Indeed, in this case $B_{1}=\{1,\alpha\}$ is basis of subspace $span(1,\alpha
,\overline{\alpha},|\alpha|^{2})$ and $\beta\in span(1,\alpha,\overline
{\alpha},|\alpha|^{2}),$ which contradicts (iii).

\item[(c)] $B=\{1,\alpha,\overline{\alpha},|\alpha|^{2}\}$ (or $B=\{1,\beta
,\overline{\beta},|\beta|^{2}\}$). In this case $B_{1}=\{1,\alpha
,\overline{\alpha},|\alpha|^{2}\}$ is basis of subspace $span(1,\alpha
,\overline{\alpha},|\alpha|^{2})$ and $\beta\in span(1,\alpha,\overline
{\alpha},|\alpha|^{2}).$ This case implies that $\beta=K\overline{\alpha}$ or
$\beta=K|\alpha|^{2}$ for $K\in\mathbb{Q}(\sqrt{-D})$ according to (iii).

\item[(d)] $B=\{1,\alpha,\beta\}$. This implies that $\overline{\alpha
}=K\alpha$ and $\overline{\beta}=L\beta$, for $K,L\in\mathbb{Q}(\sqrt{-D})$.

\item[(e)] $B=\{1,\alpha,\beta,\overline{\beta},|\beta|^{2}\}$ (or
$B=\{1,\alpha,\overline{\alpha},|\alpha|^{2},\beta\}$). Here, we have
$\overline{\alpha}=K\alpha$ for $K\in\mathbb{Q}(\sqrt{-D})$ (or $\overline
{\beta}=K\beta$ for $K\in\mathbb{Q}(\sqrt{-D})$)

\item[(f)] $B=\{1,\alpha,\overline{\alpha},|\alpha|^{2},\beta,\overline{\beta
},|\beta|^{2}\}$.
\end{description}

In what follows, we show that $|P|\not =|Q|$ in each of cases (c) to (f)
unless $\operatorname{Re}(c)=0.$ Assume that $|P|=|Q|$, ie.
\begin{equation}
0=\left\vert \frac{P}{\sqrt{c}}\right\vert ^{2}-\left\vert \frac{Q}{\sqrt{c}%
}\right\vert ^{2}=(\overline{a}b)\alpha+(a\overline{b})\overline{\alpha
}+|b|^{2}|\alpha|^{2}-(\overline{d}e)\beta-(d\overline{e})\overline{\beta
}-|e|^{2}|\beta|^{2}. \label{e123}%
\end{equation}

\emph{\textbf{Case (f)}}: Let $B=\{1,\alpha,\overline{\alpha},|\alpha
|^{2},\beta,\overline{\beta},|\beta|^{2}\}$ basis $B$ for $V.$ Since the set
$\{\alpha,\overline{\alpha},|\alpha|^{2},\beta,\overline{\beta},|\beta|^{2}\}$
is linearly independent, all coefficients have to be zero:
\[
\overline{a}b=a\overline{b}=|b|^{2}=\overline{d}e=d\overline{e}=|e|^{2}=0.
\]
This implies $b=e=0$ and
\[
P=a\sqrt{c}=\pm d\sqrt{c}=\pm Q,
\]
which is not possible.

\emph{\textbf{Case (e)}}: The assumption is that the set $B=\{1,\alpha
,\beta,\overline{\beta},|\beta|^{2}\}$ form a basis for $V$. In this case we
know that $\overline{\alpha}=K\alpha$ and $|\alpha|^{2}=K\alpha^{2}$ for
$K\in\mathbb{Q}(\sqrt{-D})$. Obviously, $K\not =0$. So, (\ref{e123}) imply
\[
(|b|^{2}K\alpha^{2})1+(\overline{a}b+a\overline{b}K)\alpha-(\overline
{d}e)\beta-(d\overline{e})\overline{\beta}-|e|^{2}|\beta|^{2}=0.
\]
The coefficients must be zero:
\[
|b|^{2}\underbrace{K\alpha^{2}}_{\not =0}=\overline{a}b+a\overline
{b}K=\overline{d}e=d\overline{e}=|e|^{2}=0.
\]
Hence, $b=e=0$ and
\[
P=a\sqrt{c}=\pm d\sqrt{c}=\pm Q,
\]
which is not possible. Similarly, we obtain a contradiction, if we assume
$B=\{1,\alpha,\overline{\alpha},|\alpha|^{2},\beta\}$ is a basis for $V.$

\emph{\textbf{Case (d)}}: The set $B=\{1,\alpha,\beta\}$ form a basis for $V$.
This is a situation when
\[
\overline{\alpha}=K\alpha,\ |\alpha|^{2}=K\alpha^{2},\ \overline{\beta}%
=L\beta,\ |\beta|^{2}=L\beta^{2},
\]
for $K,L\in\mathbb{Q}(\sqrt{-D})$ and $K,L\not =0$. Furthermore, $K,L$ are
units in $\mathbb{Q}(\sqrt{-D})$, ie. $|K|=|L|=1$. Implementing that into
(\ref{e123}) we get
\[
(|b|^{2}K\alpha^{2}-|e|^{2}L\beta^{2})1+(\overline{a}b+a\overline{b}%
K)\alpha-(\overline{d}e+d\overline{e}L)\beta=0,
\]
and
\[
|b|^{2}K\alpha^{2}=|e|^{2}L\beta^{2},\ \overline{a}b=-a\overline
{b}K,\ \overline{d}e=-d\overline{e}L.
\]
Assume that $b,e\not =0$. Since $a,d\not =0$, substituting
\[
K=-\frac{\overline{a}b}{a\overline{b}},\ L=-\frac{\overline{d}e}{d\overline
{e}},
\]
we get
\[
|b|^{2}\frac{\overline{a}b}{a\overline{b}}\,\alpha^{2}=|e|^{2}\frac
{\overline{d}e}{d\overline{e}}\,\beta^{2}\ \Leftrightarrow\ \left(
\frac{\overline{a}b}{|a|}\right)  ^{2}\alpha^{2}=\left(  \frac{\overline{d}%
e}{|d|}\right)  ^{2}\beta^{2}.
\]
Also since, $a=\pm d$, we have
\[
b^{2}\alpha^{2}=e^{2}\beta^{2}.
\]
So,
\[
(b\alpha-e\beta)(b\alpha+e\beta)=0
\]
and this leads to $b=e=0$, because $\alpha,\beta$ are linearly independent,
which gives again $P=\pm Q$. A contradiction!

\emph{\textbf{Case (c)}}: Recall that $\{1,\alpha,\overline{\alpha}%
,|\alpha|^{2}\}$ forms a basis of $V$ and $\beta=K\overline{\alpha}$ or
$\beta=K|\alpha|^{2}$ for $K\in\mathbb{Q}(\sqrt{-D})$.

If $\beta=K|\alpha|^{2}$, then $\overline{\beta}=\overline{K}|\alpha|^{2}$ and
$|\beta|^{2}=|K|^{2}|\alpha|^{4}\in\mathbb{Q}(\sqrt{-D})$. So, (\ref{e123})
implies that
\[
(-|e|^{2}|K|^{2}|\alpha|^{4})\cdot1+(\overline{a}b)\alpha+(a\overline
{b})\overline{\alpha}+(|b|^{2}-\overline{d}eK-d\overline{e}\overline
{K})|\alpha|^{2}=0.
\]
Therefore,
\[
|e|^{2}|K|^{2}|\alpha|^{4}=0,\ \overline{a}b=0,\ |b|^{2}-\overline
{d}eK-d\overline{e}\overline{K}=0.
\]
Evidently $e=b=0$ which imply $P=\pm Q$, a contradiction.

If $\beta=K\overline{\alpha}$, then $\overline{\beta}=\overline{K}\alpha$ and
$|\beta|^{2}=|K|^{2}|\alpha|^{2}$. Notice that $\overline{\beta}\not =L\beta$
for all $L\in\mathbb{Q}(\sqrt{-D})$. (If $\overline{\beta}=L\beta$, then
$\beta=L^{-1}K\alpha,$ which is not possible.) According to (\ref{e123}) we
have
\[
(\overline{a}b-d\overline{e}\overline{K})\alpha+(a\overline{b}-\overline
{d}eK)\overline{\alpha}+(|b|^{2}-|e|^{2}|K|^{2})|\alpha|^{2}=0,
\]
and
\[
a\overline{b}-\overline{d}eK=0,\ |b|^{2}-|e|^{2}|K|^{2}=0.
\]
Therefore,%
\begin{equation}
K=\frac{a\overline{b}}{\overline{d}e}=\pm\frac{a\overline{b}}{\overline{a}e}.
\label{K}%
\end{equation}
From (\ref{Pc}) and (\ref{Qc}) we obtain
\begin{equation}\label{abe1}
a^{2}-b^{2}c\left(  c+2\right)  =\frac{2}{c+2},\text{ \ \ \ \ }a^{2}
-e^{2}c\left(  c-2\right)  =-\frac{2}{c-2} 
\end{equation}
and
\begin{equation}
cb_{1}^{2}-\left(  c+2\right)  a^{2}=-2,\text{ \ \ \ }\left(  c-2\right)
a^{2}-ce_{1}^{2}=-2, \label{abe}
\end{equation}
which again implies
\begin{equation}
e^{2}c\left(  c-2\right)  -b^{2}c\left(  c+2\right)  =\frac{4c}{c^{2}-4}\text{
\ and \ }\left(  c+2\right)  e_{1}^{2}-\left(  c-2\right)  b_{1}^{2}=4.
\label{eb}%
\end{equation}
Equation (\ref{K}) implies $\left\vert e\beta\right\vert =\left\vert
b\alpha\right\vert ,$ which again implies $\left\vert e_{1}^{2}\left(
c+2\right)  \right\vert =\left\vert b_{1}^{2}\left(  c-2\right)  \right\vert
.$ Let%
\[
X=\left(  c+2\right)  e_{1}^{2}\text{ \ \ and \ \ }Y=\left(  c-2\right)
b_{1}^{2}.
\]
Therefore, we have $X-Y=4$ and $\left\vert X\right\vert =\left\vert
Y\right\vert ,$ which implies $\operatorname{Re}X=2,$ $\operatorname{Re}Y=-2.$
On the other hand, from (\ref{K}) and (\ref{eb}) we obtain
\begin{equation}
\frac{c^{2}-4}{c\cdot\overline{a}^{2}}\left(  \overline{b}^{2}a^{2}%
\overline{\alpha}^{2}-\overline{a}^{2}b^{2}\alpha^{2}\right)  =4. \label{2Imz}%
\end{equation}
Since $\overline{b}^{2}a^{2}\overline{\alpha}^{2}-\overline{a}^{2}b^{2}%
\alpha^{2}=2\operatorname{Im}\left(  a\overline{b}\overline{\alpha}\right)
^{2}i,$ equation (\ref{2Imz}) implies%
\begin{equation}
\operatorname{Re}\left(  \frac{c^{2}-4}{c\overline{a}^{2}}\right)  =0.
\label{Re}%
\end{equation}
The condition (\ref{Re}) is equivalent to the condition $\operatorname{Re}%
\left(  \frac{\left(  c^{2}-4\right)  a^{2}}{c}\right)  =0.$ On the other
hand, from (\ref{abe}) and $\operatorname{Re}\left(  \left(  c+2\right)
e_{1}^{2}\right)  =2,$ we obtain
\begin{eqnarray*}
\operatorname{Re}\left(  \frac{\left(  c^{2}-4\right)  a^{2}}{c}\right)
&=&\operatorname{Re}\left(  \frac{\left(  c+2\right)  \left(  ce_{1}
^{2}-2\right)  }{c}\right)  \\ &=&\operatorname{Re}\left(  -2-\frac{4}{c}+e_{1}
^{2}\left(  c+2\right)  \right)  =-\operatorname{Re}\left(  \frac{4}%
{c}\right)  =0,\end{eqnarray*}
which again implies $\operatorname{Re}c=0$, ie. $c=vi,$ $v\in
\mathbb{Z}\left(  \sqrt{D}\right)  ,$ $v\neq0,\pm1.$ In general we have
$\overline{\sqrt{z}}=\sqrt{\overline{z}},\ $if $z\in\mathbb{C}\backslash
\mathbb{R}^{-}\ $and$\ \overline{\sqrt{z}}=-\sqrt{\overline{z}},\ $%
if\ $z\in\mathbb{R}^{-}.$ Since, we have $\beta=\sqrt{vi\left(
vi-2\right)  }$, $\alpha=\sqrt{vi\left(  vi+2\right)  }$ and $\overline{\left(
vi-2\right)  vi}=vi\left(  vi+2\right)  \notin\mathbb{R}^{-}$, 
then $\beta=\overline{\alpha}$ ie. $K=1$. On the other hand, from (\ref{2Imz})
we obtain that $\operatorname{Re}\left(  \frac{-4iv}{v^{2}+4}\left(
\overline{a}\right)  ^{2}\right)  =0,$ which again implies $\overline{a}=-a$
or $\overline{a}=a.$ Therefore, since\ $K=1$ from (\ref{K}), we obtain
$e=\pm\overline{b}$ and distinguish four cases:

\begin{enumerate}
\item If $\overline{a}=a=d$, then $e=\frac{a\overline{b}}{\overline{d}%
}=\overline{b}$ and $\frac{P}{\sqrt{c}}=a+b\alpha,$ $\frac{Q}{\sqrt{c}%
}=a+\overline{b}\overline{\alpha}$;
\item $\overline{a}=-a,$ $a=d$, then $e=\frac{a\overline{b}}{\overline{d}%
}=-\overline{b}$ and $\frac{P}{\sqrt{c}}=a+b\alpha,$ $\frac{Q}{\sqrt{c}%
}=a-\overline{b}\overline{\alpha}$;
\item $\overline{a}=a,$ $a=-d$, then $e=\frac{a\overline{b}}{\overline{d}%
}=-\overline{b}$ and $\frac{P}{\sqrt{c}}=a+b\alpha,$ $\frac{Q}{\sqrt{c}%
}=-a-\overline{b}\overline{\alpha}$;
\item $\overline{a}=-a,$ $a=-d$, then $e=\frac{a\overline{b}}{\overline{d}%
}=\overline{b}$ and $\frac{P}{\sqrt{c}}=a+b\alpha,$ $\frac{Q}{\sqrt{c}%
}=-a+\overline{b}\overline{\alpha}$.
\end{enumerate}
Note that each of the above cases implies that $|P|=|Q|$ and therefore $\Lambda=0$. In what follows we show that in this particular case the equation  $u_m=\pm u_n'$, $m,n>0$,  has no solution.

First we will show that $\overline{a}=\pm a$ imply $b\alpha\neq\pm\overline
{b}\overline{\alpha}.$ It is enough to show $\operatorname{Im}\left(
b\alpha\right)  ^{2}\neq0.$ Suppose $\operatorname{Im}\left(  b\alpha\right)
^{2}=0.$ Then, from (\ref{abe1}) we have
\[
\left(  b\alpha\right)  ^{2}=b^{2}c\left(  c+2\right)  =a^{2}-\frac{2}{vi+2}.
\]
Since,\ we have $\operatorname{Im}\left(  b\alpha\right)  ^{2}%
=\operatorname{Im}a^{2}=0$, then $\operatorname{Im}\frac{2}{vi+2}=0\ $which
again implies $v=0,$ a contradiction.

From (\ref{e.111}) we obtain%
\begin{align}
\frac{P}{\sqrt{c}}-\frac{Q}{\sqrt{c}}  &  =-\frac{2}{\left(  c-2\right)
}\cdot\frac{\sqrt{c}}{Q}+\frac{2}{\left(  c+2\right)  }\cdot\frac{\sqrt{c}}%
{P},\ \ \text{if \ }a=d,\label{PQ-}\\
\frac{P}{\sqrt{c}}+\frac{Q}{\sqrt{c}}  &  =\frac{2}{\left(  c-2\right)  }%
\cdot\frac{\sqrt{c}}{Q}+\frac{2}{\left(  c+2\right)  }\cdot\frac{\sqrt{c}}%
{P},\ \ \text{if \ }a=-d. \label{PQ+}%
\end{align}
If $\overline{a}=a=d$, then (\ref{PQ-}) imply
\[
b\alpha-\overline{b}\overline{\alpha}=\frac{2}{2-vi}\cdot\frac{1}{\overline
{a}+\overline{b}\overline{\alpha}}+\frac{2}{2+vi}\cdot\frac{1}{a+b\alpha}.
\]
Since $\operatorname{Re}\left(  b\alpha-\overline{b}\overline{\alpha}\right)
=0$ and $\operatorname{Im}\left(  \frac{2}{2-vi}\cdot\frac{1}{\overline
{a}+\overline{b}\overline{\alpha}}+\frac{2}{2+vi}\cdot\frac{1}{a+b\alpha
}\right)  =0,$ we obtain $b\alpha=\overline{b}\overline{\alpha},$ a
contradiction. Similarly, we obtain contradiction in other three cases.  
Analogous results are obtained if  $B=\{1,\beta,\overline{\beta}
,|\beta|^{2}\}$ is a basis for $V.$

Note that if $c=vi$, $v\in\mathbb{Z}[\sqrt{D}],$ $v\neq0,\pm1,$ then
$\beta=\overline{\alpha}$ and $\{1,\alpha,\overline{\alpha},|\alpha|^{2}\}$
forms a basis of $V=\mathbb{Q}(\sqrt{-D})(\alpha,\overline{\alpha}%
,\beta,\overline{\beta})$, that is Case (c). Therefore we have proved the
following assertion.

\begin{proposition}
\label{PropL0}Let $c\notin S_{c}\ $and $\Lambda=\log\frac{|P|}{|Q|}.$ Then

\begin{description}
\item[i)] $\Lambda\not =0$ if and only if $\operatorname{Re}(c)\not =0$. 

\item[ii)] If $\operatorname{Re}(c)=0$, then the equation $u_{m}=\pm
u_{n}^{\prime}$ has no solution for $m,n>0.$
\end{description}
\end{proposition}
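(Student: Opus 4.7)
The plan is to analyze when $|P|^{2}=|Q|^{2}$ via the algebraic framework over $\mathbb{Q}(\sqrt{-D})$ developed above and then extract the necessary condition $\operatorname{Re}(c)=0$. First I would exploit (\ref{Pc}) and (\ref{Qc}) to write $P/\sqrt{c}=a+b\alpha$ and $Q/\sqrt{c}=d+e\beta$ with $\alpha=\sqrt{c(c+2)}$, $\beta=\sqrt{c(c-2)}$. From the explicit expressions (\ref{e.98}) and (\ref{e.99}) one computes $u_{m}=\epsilon a$ and $u'_{n}=\epsilon d$, so the hypothesis $u_{m}=\pm u'_{n}$ forces $a=\pm d$ (with $a,d\neq 0$ for $m,n\geq 2$). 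The key object is then
\[
\left|\frac{P}{\sqrt{c}}\right|^{2}-\left|\frac{Q}{\sqrt{c}}\right|^{2} = \bar{a}b\,\alpha + a\bar{b}\,\bar{\alpha} + |b|^{2}|\alpha|^{2} - \bar{d}e\,\beta - d\bar{e}\,\bar{\beta} - |e|^{2}|\beta|^{2},
\]
which lies in the $\mathbb{Q}(\sqrt{-D})$-vector space $V=\operatorname{span}(1,\alpha,\bar{\alpha},|\alpha|^{2},\beta,\bar{\beta},|\beta|^{2})$.

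Next I would invoke the preliminary claims (i)--(iii) to enumerate the six possible bases (a)--(f) of $V$. Cases (a) and (b) are excluded by claim (i) since $c\notin\{0,\pm 1,\pm 2\}$. In each of Cases (d), (e), (f), linear independence of a chosen basis together with $|P|=|Q|$ forces enough coefficients to vanish that $b=e=0$; this gives $P=\pm Q$, already ruled out by the short computation with (\ref{e.111}) at the start of the section. Case (c) splits further via claim (iii): the sub-case $\beta=K|\alpha|^{2}$ also collapses to $b=e=0$, so the only live possibility is $\beta=K\bar{\alpha}$ with $K\in\mathbb{Q}(\sqrt{-D})$.

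The main obstacle is the sub-case $\beta=K\bar{\alpha}$ of Case (c). Here I would extract $K=a\bar{b}/(\bar{d}e)$ from vanishing of the $\alpha$- and $\bar{\alpha}$-coefficients, then combine the quadratic identities (\ref{abe1}) and (\ref{abe}) with the modulus equality $|e_{1}^{2}(c+2)|=|b_{1}^{2}(c-2)|$ (which yields $X-Y=4$ and $|X|=|Y|$, hence $\operatorname{Re}X=2$, $\operatorname{Re}Y=-2$) together with the identity (\ref{2Imz}), to isolate $\operatorname{Re}((c^{2}-4)a^{2}/c)=0$. Substituting (\ref{abe}) collapses this to $\operatorname{Re}(4/c)=0$, i.e.\ $\operatorname{Re}(c)=0$. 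The reverse implication is immediate since for $c=vi$ one actually has $\beta=\bar{\alpha}$ and the corresponding sub-case of (c) realises $|P|=|Q|$; this proves part (i).

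For part (ii), I would set $c=vi$ with $v\in\mathbb{Z}[\sqrt{D}]\setminus\{0,\pm 1\}$ and note that $\beta=\bar{\alpha}$ forces $K=1$, hence $\bar{a}=\pm a$ and $e=\pm\bar{b}$, giving four sub-cases by the signs in $a=\pm d$ and $\bar{a}=\pm a$. In each one I would rewrite (\ref{e.111}) via (\ref{PQ-}) or (\ref{PQ+}) and compare real parts with its complex conjugate to force $b\alpha=\bar{b}\bar{\alpha}$ (or its negative). But (\ref{abe1}) gives $(b\alpha)^{2}=a^{2}-2/(vi+2)$; since $\operatorname{Im}(a^{2})=0$ whereas $\operatorname{Im}(2/(vi+2))\neq 0$ for $v\neq 0$, we obtain $\operatorname{Im}(b\alpha)^{2}\neq 0$, contradicting $b\alpha=\pm\bar{b}\bar{\alpha}$. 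The delicate bookkeeping around Case (c) of part (i) and the sign analysis of part (ii) is where essentially all of the work lies; everything else is forced.
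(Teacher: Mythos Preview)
Your proposal is correct and follows essentially the same line as the paper's own proof: the setup via (\ref{Pc}), (\ref{Qc}) giving $a=\pm d$, the vector space $V$ and the basis case analysis (a)--(f) with claims (i)--(iii), the reduction of Cases (d)--(f) and half of (c) to $P=\pm Q$, and in the surviving sub-case $\beta=K\bar{\alpha}$ the extraction of $\operatorname{Re}(c)=0$ from (\ref{abe}), (\ref{eb}), (\ref{2Imz}); for part (ii) the four sign sub-cases combined with (\ref{PQ-})/(\ref{PQ+}) and the observation $\operatorname{Im}(b\alpha)^{2}\neq 0$ are exactly the paper's argument. The only point you leave implicit is the separate disposal of $m=1$ or $n=1$ (handled by the size estimates at the end of \S\ref{s.41}), which is routine.
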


\subsection{A reduction procedure}
We consider following linear form in logarithms of algebraic numbers,
$$ \Lambda=\log|Q|-\log|P|=n\log\eta
-m\log\vartheta +\log\xi,$$
where
$$\eta=|c-1+\sqrt{c}\sqrt{c-2}|,\ \vartheta=|c+1+\sqrt{c(c+2)}|,\ \xi=\left|\frac{\sqrt{c+2}(\sqrt{c}+\sqrt{c-2})}{\sqrt{c-2}(\sqrt{c}+\sqrt{c+2})}\right|.$$
First we have to calculate  the standard logarithmic Weil height of $\eta$, $\vartheta$ and $\xi$. Since, the standard logarithmic Weil height $h(\alpha)$ is bounded by 
$$ h(\alpha)\leq\frac{1}{k}\log\left( a_0\prod_{i=1}^n\max\{1,|\alpha^{(i)}|\}\right),$$
where the algebraic number $\alpha$ is a root of $a_0\prod_{i=1}^k(x-\alpha^{(i)})$. 
Note that $\eta$, $\vartheta$ and $\xi$  are roots of the following polynomials
$$ p_1(x)=1 -4(1 - c - 4 \ol{c} + c \ol{c})x^2 +  (6 - 8 c + 4 c^2 - 8
\ol{c} + 4
\ol{c}^2)x^4 -4(1 - c - 4 \ol{c} + c \ol{c})x^6+ x^8,  $$
$$ p_2(x)=1 -4(1+ c+\ol{c}+c\ol{c})x^2 +  (6 +8 c + 4 c^2 + 8
\ol{c} + 4
\ol{c}^2)x^4 -4(1+ c+\ol{c}+c\ol{c})x^6+ x^8,  $$
$p_3(x)=$ {\tiny$\frac{(-2 + c)^8 (-2 +\ol{c}^8)}{(2 + c)^8 (2 +\ol{c})^8}-4\frac{ (-2 + c)^8 (-2 +\ol{c}^8)}{(2 + c)^7 (2 +\ol{c})^7}x^2-24\frac{(-2 + c)^7 (-2 +
\ol{c}^7 (-5 + c^2 +\ol{c}^2)}{(2 + c)^7 (2 +\ol{c})^7}x^4$

$
+4 \frac{ (-2 + c)^7 (-2 +\ol{c})^7 (-35 + 4 c^2 + 4\ol{c}^2)}{(2 + c)^6 (2 +
\ol{c})^6}x^6+4\frac{(-2 + c)^6 (-2 +\ol{c})^6 (455 - 116 c^2 + 4 c^4 - 116
\ol{c}^2 + 44 c^2\ol{c}^2 + 4\ol{c}^4)}{(2 + c)^6 (2 +\ol{c})^6}x^8$

$
 -4\frac{ (-2 + c)^6 (-2 +\ol{c})^6 (273 - 36 c^2 - 36\ol{c}^2 + 16 c^2
\ol{c}^2)}{(2 + c)^5 (2 +\ol{c})^5}x^{10}$

$ -8\frac{ (-2 + c)^5 (-2 +
\ol{c})^5 (-1001 + 253 c^2 - 8 c^4 + 253\ol{c}^2 - 72 c^2
\ol{c}^2 + 8 c^4\ol{c}^2 - 8\ol{c}^4 + 8 c^2
\ol{c}^4)}{(2 + c)^5 (2 +\ol{c})^5}x^{12}$

$
+4\frac{ (-2 + c)^5 (-2 +\ol{c})^5 (-715 + 88 c^2 + 88
\ol{c}^2 + 16 c^2\ol{c}^2)}{(2 + c)^4 (2 +\ol{c})^4}x^{14}
$

 $2 \frac{(-2 + c)^4 (-2 +
\ol{c})^4 (6435 - 1584 c^2 + 48 c^4 -
   1584\ol{c}^2 + 16 c^2\ol{c}^2 + 64 c^4\ol{c}^2 + 48\ol{c}^4 + 64 c^2
\ol{c}^4)}{(2 + c)^4 (2 +\ol{c})^4}x^{16}$

$+ 4\frac{ (-2 + c)^4 (-2 +
\ol{c})^4 (-715 + 88 c^2 + 88
\ol{c}^2 + 16 c^2
\ol{c}^2)}{(2 + c)^3 (2 +
\ol{c})^3}x^{18}$

$ -8\frac{ (-2 + c)^3 (-2 +
\ol{c})^3 (-1001 + 253 c^2 - 8 c^4 + 253
\ol{c}^2 - 72 c^2
\ol{c}^2 + 8 c^4
\ol{c}^2 - 8
\ol{c}^4 + 8 c^2
\ol{c}^4)}{(2 + c)^3 (2 +
\ol{c})^3}x^{20}$

$-4\frac{ (-2 + c)^3 (-2 +
\ol{c})^3 (273 - 36 c^2 - 36
\ol{c}^2 + 16 c^2
\ol{c}^2)}{(2 + c)^2 (2 +
\ol{c})^2}x^{22} +4 \frac{(-2 + c)^2 (-2 +
\ol{c})^2 (455 - 116 c^2 + 4 c^4 - 116
\ol{c}^2 + 44 c^2
\ol{c}^2 + 4
\ol{c}^4)}{(2 + c)^2 (2 +
\ol{c})^2}x^{24}$

$+4\frac{ (-2 + c)^2 (-2 +
\ol{c})^2 (-35 + 4 c^2 + 4
\ol{c}^2)}{(2 + c) (2 +
\ol{c})}x^{26} -24\frac{ (-2 + c) (-2 +
\ol{c} )(-5 + c^2 +
\ol{c}^2)}{(2 + c) (2 +
\ol{c})}x^{28}-4 (-2 + c) (-2 +
\ol{c})x^{30}+x^{32},$}\\
 respectively. 

Each conjugate of an algebraic number in absolute value can be bounded by
$|\alpha^{\prime}|\leq\max\left\{  1,n|a^{\prime}|\right\}  $, where
$|a^{\prime}|=\max\left\{  |a_{0}|,\ldots,|a_{n-1}|\right\}  $ and
$a_{0},\ldots,a_{n-1}$ are coefficients of the related monic polynomial
$\prod_{i=1}^{k}(x-\alpha^{(i)})$. Hence,
$$
h(\alpha)\leq\log(\max\{1,n|a^{\prime}|\}).$$
It is easy to see, that  each coefficient of the polynomials $p_1(x)$ and $p_2(x)$  can be bounded (in the absolute value) by $6 +16| c| + 8| c|^2$. All  coefficients of  $p_3(x)$ can be bounded by $(| c | +2)^{16} (6435 + 3168 | c |^2 +112 | c |^4 +128 | c |^6)$ - a very rough bound.
 So, 
$$h(\eta),\, h(\vartheta)\leq \log(8(6 +16| c| + 8| c|^2))<28.12,$$
$$h(\xi)\leq\log\left(32(| c | +2)^{16} (6435 + 3168 | c |^2 +112 | c |^4 +128 | c |^6)\right)<271.82,$$
and obviously $h'(\eta)$, $h'(\vartheta)$ and $h'(\xi)$ are less than the values given above. Finally, since $d\leq32\cdot8\cdot 8$ we have
$$-\log |\Lambda| \leq18\cdot4!\cdot3^{4}(32\cdot2048)^{5}28.12^2\cdot 271.82 \cdot\log(2\cdot3\cdot2048)\log l<8.6\cdot10^{34}\log l,$$
where $l=\max\{m,n\}$. If $l=m$, applying $|\Lambda|<3^{-m}$    to the previous inequality, we get
$$ \frac{m}{\log m}<7.8\cdot10^{34}$$
which does not hold for $m\geq 6.7\cdot10^{36}$. Therefore, we solve 
$$ |\Lambda|=|\log\eta|\left|n
-m\frac{\log\vartheta}{\log\eta} +\frac{\log\xi}{\log\eta}\right|<3^{-m}, \ m< 6.7\cdot10^{36},$$
ie.
\begin{equation}\label{333}
|m\theta-n +\gamma|<\delta\cdot 3^{-m} \end{equation}
where $\theta=\frac{\log\vartheta}{\log\eta}$, $\gamma=-\frac{\log\xi}{\log\eta}$ and $\delta=\frac{1}{|\log\eta|}$. 

If $l=n,$ applying $|\Lambda|<1.55^{-n}$ to the previous inequality, we get
$$
\frac{n}{\log n}<2\cdot 10^{35}$$ 
which does not hold for $n\geq 1.715\cdot10^{37}$. Therefore, we solve
$$
|\Lambda|=|\log\vartheta|\left\vert m-n\frac{\log\eta}{\log\vartheta}%
+\frac{\log\xi}{\log\vartheta}\right\vert <\left(  1.55\right)  ^{-n}%
,\ \ \ n<1.715\cdot10^{37}$$
ie.
\begin{equation}\label{155}
|n\theta^{\prime}-n+\gamma^{\prime}|<\delta^{\prime}\cdot1.55^{-n}
\end{equation}
where $\theta^{\prime}=\frac{\log\eta}{\log\vartheta}$, $\gamma^{\prime}%
=\frac{\log\xi}{\log\vartheta}$ and $\delta^{\prime}=\frac{1}{|\log\vartheta|}$.\\

Now we will apply the reduction method similar to one described in \cite{DU-reduk}.
\begin{lemma}[{\cite[Lemma 4a]{DU-reduk}}]
Let $M$ be a positive integer and let $p/q$ be a convergent
of the continued fraction expansion of $\theta$ such that $q > 6M$. Furthermore,
let $\epsilon = \|\gamma q\| - M \|\theta q\|$, where $\| \cdot \|$ denotes the distance from the nearest
integer. If $\epsilon > 0$, then the inequality
$$ |m\theta - n + \gamma| < \delta a^{-m}$$
has no integer solutions $m$ and $n$ such that $\log(\delta q/\epsilon)/ \log a \leq m \leq M$.
\end{lemma}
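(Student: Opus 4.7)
The plan is to transform the target inequality by multiplying through by $q$, exploiting the excellent rational approximation that $p/q$ provides for $\theta$. Concretely, multiplying by $q$ gives
\[ |mq\theta - nq + \gamma q| < q\delta a^{-m}. \]
Writing $q\theta = p + (q\theta - p)$ and setting $N = nq - mp \in \mathbb{Z}$, this rearranges to
\[ |\gamma q - N + m(q\theta - p)| < q\delta a^{-m}. \]
Because $p/q$ is a convergent of $\theta$, the classical estimate $|q\theta - p| < 1/q_{k+1} < 1/q$ holds, and in particular the assumption $q > 6M$ comfortably forces $|q\theta - p| < 1/2$, so $p$ is the nearest integer to $q\theta$, i.e.\ $|q\theta - p| = \|\theta q\|$.

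Next I would apply the triangle inequality together with the defining property of $\|\cdot\|$ as the distance to the nearest integer. Since $N$ is an integer, $|\gamma q - N| \geq \|\gamma q\|$, so using $m \leq M$ we get
\[ \|\gamma q\| \leq |\gamma q - N| \leq m\,|q\theta - p| + q\delta a^{-m} \leq M\|\theta q\| + q\delta a^{-m}. \]
Rearranging yields the clean estimate $\epsilon = \|\gamma q\| - M\|\theta q\| \leq q\delta a^{-m}$.

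Finally, since $\epsilon > 0$ by hypothesis, taking logarithms gives $a^m \leq q\delta/\epsilon$, i.e.\ $m \leq \log(q\delta/\epsilon)/\log a$, which directly contradicts the lower bound $m \geq \log(q\delta/\epsilon)/\log a$ required of a hypothetical solution. The argument is essentially a chain of triangle inequalities and I do not expect any serious obstacle; the only point meriting care is the identification $|q\theta - p| = \|\theta q\|$, which is where the convergent property and the mild separation $q > 6M$ come in (this quantitative slack also prevents the inequalities from being degenerate when $\|\theta q\|$ is compared with $\|\gamma q\|$).
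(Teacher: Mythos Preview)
The paper does not supply its own proof of this lemma; it is quoted verbatim from Dujella \cite{DU-reduk} and used as a black box in the reduction step. So there is nothing in the paper to compare your argument against.

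That said, your argument is correct and is exactly the standard proof (the one in \cite{DU-reduk}). One cosmetic point: in your final paragraph you write $a^m \leq q\delta/\epsilon$ and then claim a contradiction with $m \geq \log(q\delta/\epsilon)/\log a$, but two non-strict inequalities in opposite directions are not contradictory. The fix is already present in your own chain: the original hypothesis $|m\theta - n + \gamma| < \delta a^{-m}$ is strict, so the reverse triangle inequality gives $\|\gamma q\| \leq |\gamma q - N| < M\|\theta q\| + q\delta a^{-m}$ strictly, hence $\epsilon < q\delta a^{-m}$ and $m < \log(q\delta/\epsilon)/\log a$, which genuinely contradicts the assumed lower bound. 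Also, the identification $|q\theta - p| = \|\theta q\|$ needs only $q \geq 2$ (since $|q\theta - p| < 1/q$ for any convergent); the hypothesis $q > 6M$ is not used in the logical skeleton of the proof but is a practical condition that makes $\epsilon > 0$ achievable in applications.
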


Since our bound for the absolute value of $c$ is very huge (almost $160\, 000$), we  perform reductions only
for $|c|\leq200$, $c\in\Z_M$. We obtained that (\ref{333}) and (\ref{155})  has no integer solutions for $m\geq n>22$
and $n\geq m >55$, respectively. The reason for not achieving a better bound for $m$ and $n$
is because $\theta$ and $\theta'$ are very close to $1$ and hence  their first convergent is too large. (For instance,
for $c=1+66\sqrt{-2}$ related $\theta<1.000044$ and the denominator of the  first convergent is $q=22\,788$.) Finally, 
we showed that the equations $u_m=\pm u_n'$ for $1\leq m,n\leq 55$ have no solutions in $\Z_M$ except $c=0,\pm 1,\pm 2$.
(Note that  according to (\ref{e12}) and  (\ref{e12}), $u_k$ and $u_k'$ are
$k$-th degree polynomials in the variable $c$. So, solving $u_m=\pm u_n'$ reduces to finding  roots of certain polynomials in $\Z_{M}$.)

\begin{proposition}
\label{PropRP}If $|c|\leq 200$ and $c\not\in S_c$,  then all non-equivalent generators of power integral
basis of $\mathcal{O=}\mathbb{Z}_{M}\left[  \xi\right]  $ over $\mathbb{Z}%
_{M}$ are $\alpha=\xi,$\ $2\xi-2c\xi^{2}+\xi^{3}$.
\end{proposition}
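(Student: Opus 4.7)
The plan is to finish the range $|c|\leq 200$ (excluding the exceptional set $S_c$ already isolated in Section \ref{dio.311}) by combining the Baker–Wüstholz bound with a Baker–Davenport style reduction and a finite check for small indices. By Remark \ref{Rem1} and the reduction of the index form equation to the simultaneous Pellian system \eqref{e6}, \eqref{e7}, it suffices to show that under these assumptions the only solutions satisfy $U=\pm\varepsilon$, i.e.\ $u_m=\pm u_n'$ forces $m=n=0$.

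First I would split on $\mathrm{Re}(c)$. If $\mathrm{Re}(c)=0$, Proposition \ref{PropL0}(ii) already gives the conclusion. By the symmetry $c\leftrightarrow -c$ (which swaps $V$ and $Z$ in the system), I may assume $\mathrm{Re}(c)>0$. Then write $u_m$ and $u_n'$ in the closed forms \eqref{e90}, \eqref{e92} through the quantities $P$ and $Q$ of \eqref{e.100p}, \eqref{e104}. From the analysis in Section \ref{sec:BT} the assumption $u_m=\pm u_n'$ with $m,n\geq 2$ yields the bounds $|\Lambda|<3^{-m}$ and $|\Lambda|<1.55^{-n}$ for the linear form in three logarithms
\[
\Lambda=n\log\eta-m\log\vartheta+\log\xi,
\]
with $\eta,\vartheta,\xi$ as defined in the reduction subsection; the cases $m=1$ or $n=1$ are ruled out directly by the size comparisons given at the end of Section \ref{s.41}. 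Since $\mathrm{Re}(c)\neq 0$, Proposition \ref{PropL0}(i) guarantees $\Lambda\neq 0$, which is the nontriviality hypothesis required to invoke Theorem \ref{l.bw}.

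Next I would apply Theorem \ref{l.bw} with the explicit Weil-height bounds $h'(\eta),h'(\vartheta)<28.12$ and $h'(\xi)<271.82$ already computed in the excerpt, obtaining absolute upper bounds on the order of $m<6.7\cdot 10^{36}$ and $n<1.715\cdot 10^{37}$. Then, for each $c\in\mathbb{Z}_M$ with $|c|\leq 200$ and $c\notin S_c$, I would compute the continued fraction expansion of $\theta=\log\vartheta/\log\eta$ (and of $\theta'=\log\eta/\log\vartheta$) with sufficient precision, and apply the Baker–Davenport Lemma (\cite[Lemma 4a]{DU-reduk}) to \eqref{333} and \eqref{155}. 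This is where the main practical obstacle lies: because $\theta,\theta'$ are extremely close to $1$ when $|c|$ is moderate, the early convergents have huge denominators, so a single reduction step will only drag the bound down to something like $m\leq 22$ and $n\leq 55$, not to zero.

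Finally, I would close the gap with a direct finite verification. For each $c$ in the range and each pair $(m,n)$ with $1\leq m,n\leq 55$, the equation $u_m=\pm u_n'$ is a polynomial equation in $c$ with coefficients in $\mathbb{Z}$ (since by \eqref{e12} and \eqref{e14} both $u_m$ and $u_n'$ are polynomials of degrees $m$ and $n$ in $c$ up to the factor $\epsilon$). I would compute the roots of each such polynomial in $\mathbb{Z}_M$ and verify that none of them lies in $\{c\in\mathbb{Z}_M:|c|\leq 200,\ c\notin S_c\}\setminus\{0,\pm 1,\pm 2\}$. Together with the $m=n=0$ case, this yields exactly $U=\pm\varepsilon$, hence, via \eqref{UVZ} and \eqref{e4} exactly as in the proof of Proposition \ref{PropCjv}, the only generators are $\alpha=\xi$ and $\alpha=2\xi-2c\xi^2+\xi^3$. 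The main obstacles, in order of seriousness, are (a) controlling precision in the continued fraction computation for those $c$ where $\theta$ is anomalously close to $1$, and (b) keeping the finite enumeration tractable — this is precisely why the authors restrict to $|c|\leq 200$ rather than pushing to the full Baker bound.
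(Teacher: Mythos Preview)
Your proposal is correct and follows essentially the same route as the paper: handle $\operatorname{Re}(c)=0$ via Proposition~\ref{PropL0}(ii), reduce to $\operatorname{Re}(c)>0$ by the $c\leftrightarrow -c$ symmetry, apply Theorem~\ref{l.bw} with the stated height bounds to obtain $m<6.7\cdot10^{36}$, $n<1.715\cdot10^{37}$, then run the Baker--Davenport reduction of \cite[Lemma~4a]{DU-reduk} on \eqref{333} and \eqref{155} for each admissible $c$ with $|c|\le200$ to bring the bounds down to $m\le22$ (when $m\ge n$) and $n\le55$ (when $n\ge m$), and finish by checking the polynomial equations $u_m=\pm u_n'$ in $c$ for $1\le m,n\le55$. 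Your identification of the practical obstacle---$\theta$ and $\theta'$ being very close to $1$, forcing large first convergents---matches exactly the reason the authors give for stopping at $|c|\le200$.
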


\section{On the case $c\in S_{c}$}

\label{s.Sc}

So far, we have observed the case when the parameter $c\notin S_{c},$ where
the set $S_{{\small c}}$ is given by (\ref{SC1}). Note, that if $c\in
S_{{\small c}},$ then on at least one of the equation of the system (\ref{e6})
and (\ref{e7}) we can not apply Lemma \ref{l.1}. Indeed, in these cases, there
are additional classes of solutions of the equation (\ref{e6}) or (\ref{e7}),
or there exists only finitely many solutions of those equations. Also note, if
$\left(  p,q\right)  =\left(  a,b\right)  \ $is a solution of the Thue
equation (\ref{e5}) for $c=c_{0},$ then $\left(  p,q\right)  =\left(
b,a\right)  \ \ $is a solution of this equation for $c=-c_{0}.$ Therefore, it
is enough to observe only $c^{\text{'}}$s from the set $S_{{\small c}}$ with
$Re(c)\geq0.$ Furthermore, all $c\in S_{{\small c}}$ are from only one
imaginary quadratic field except $c=\pm1\ $that belong to each field
$M=\mathbb{Q}\left(  \sqrt{-D}\right)$. Thus, for each $c\in S_{{\small c}}%
$, $c\neq\pm1,$ we have to find additional classes of solutions of the
equation (\ref{e6}) or (\ref{e7}) (see \cite{FJ}) and repeat the entire
procedure from previous sections. This situation is much simpler because we
have a specific value of $c$ and each $c$ is from exactly one field. On the
other hand, we need to find intersections of at least four recursive series.

For $c=1$ Thue equation (\ref{e5}) have the form
\begin{equation}
p^{4}-2p^{3}q+2p^{2}q^{2}+2pq^{3}+q^{4}=\mu,\label{t1}%
\end{equation}
and the related system is%
\[
V^{2}-3U^{2}=-2\mu,\text{ \ \ }U^{2}+Z^{2}=2\mu.
\]
By Lemma \ref{l.1}, solutions of the first equation are $\left(  V,U\right)
=\left(  \pm v_{m},\pm u_{m}\right)  ,$ where sequences $\left(  v_{m}\right)
$ and $\left(  u_{m}\right)  $ are given by
\begin{align*}
v_{0}=  &  \epsilon, & v_{1}=  &  5\epsilon, & v_{m+2}  &  =4v_{m+1}%
-v_{m},\;\;m\geq0,\\
u_{0}=  &  \epsilon, & u_{1}=  &  3\epsilon, & u_{m+2}  &  =4u_{m+1}%
-u_{m},\;\;m\geq0,
\end{align*}
where $\epsilon=1,$\textit{\ }$i,$\textit{\ }$\omega,$\textit{\ }$\omega^{2}%
$\textit{\ }corresponds to $\mu=1,-1,\omega,\omega^{2},$ respectively.
Therefore, we have to observe the equation
\begin{equation}
U^{2}+Z^{2}=2\mu,\text{ \ \ for \ }\mu\in\left\{  1,-1,\omega,\omega
^{2}\right\}  \cap\mathbb{Q}\left(  \sqrt{-D}\right)  .\label{ec1}%
\end{equation}
If $D=1\mathbf{,}$ then $\mu=1,-1$. In this case, since $-1$ is a square in
$\mathbb{Z}_{M},$ the left side of (\ref{ec1}) can be factorized as
\[
U^{2}+Z^{2}=U^{2}-\left(  -1\right)  Z^{2}=\left(  U-iZ\right)  \left(
U+iZ\right)  =2\mu,\text{ \ }\mu=1,-1.
\]
This implies that the equation (\ref{ec1}) has only finitely many solutions
\[
\left(  U,Z\right)  =\left(  \pm1,\pm1\right)  ,\text{ if \ }\mu=1\text{ \ and
\ \ }\left(  U,Z\right)  =\left(  \pm i,\pm i\right)  ,\text{ if }%
\mu=-1\text{,}%
\]
which again implies that all solutions of system of relative Pellian equations
are given by\textit{\ }
\begin{align*}
\left(  U,V,Z\right)   &  =\left(  \pm1,\pm1,\pm1\right)  ,\text{ if \ }%
\mu=1\\
\left(  U,V,Z\right)   &  =\left(  \pm i,\pm i,\pm i\right)  ,\text{ if \ }%
\mu=-1.
\end{align*}
Hence, if $\mu=1$, then the solutions of the corresponding Thue equation are
\[
(p,q)\in\{(0,\pm1),(\pm1,0),(0,\pm i),(\pm i,0)\}
\]
and if $\mu=-1$, then there are no solutions.

Note, that for $c=1$ the corresponding Thue equation (\ref{t1}) can be
transformed into equation%
\begin{equation}
X^{2}+3Y^{2}=\mu\label{j3}%
\end{equation}
by putting $X=\pm\left(  p^{2}-pq-q^{2}\right)  $ \ and\ \ $Y=\pm pq.$ The
equation (\ref{j3}) has infinitely many solutions in all rings $\mathbb{Z}%
_{M},$ except in the ring of integers of the field $\mathbb{Q}\left(
\sqrt{-3}\right)  $ because $-3$ is a square in the related ring. In that case
equation (\ref{j3}) can be factorized as
\[
X^{2}+3Y^{2}=\left(  X-\sqrt{-3}Y\right)  \left(  X+\sqrt{-3}Y\right)  =\mu,
\]
where $\mu=1,\omega,\omega^{2}$. This implies that the equation (\ref{j3}) has
only finitely many solutions%
\[
\left(  X,Y\right)  \in\left\{  \left(  \pm1,0\right)  ,\left(  \pm
\omega,0\right)  ,\left(  \pm\omega^{2},0\right)  \right\}  .
\]
Since $Y=\pm pq=0$ for each solution from above, we conclude that all
solutions Thue equation (\ref{t1}) are
\begin{align*}
(p,q) &  \in\{(0,\pm1),(\pm1,0)\}\text{ \ if \ }\mu=1,\\
(p,q) &  \in\{(0,\pm\omega),(\pm\omega,0)\}\text{ \ if \ }\mu=\omega,\\
(p,q) &  \in\{(0,\pm\omega^{2}),(\pm\omega^{2},0)\}\text{ \ if \ }\mu
=\omega^{2}.
\end{align*}
In the ring of integers $\mathbb{Z}_{M}$ of the field $M=\mathbb{Q}\left(
\sqrt{-D}\right)  ,$ where $D\neq1,3\ $for $c=1,$ we have to find all
solutions of the equation
\begin{equation}
U^{2}+Z^{2}=2. \label{2}%
\end{equation}
In this case the equation (\ref{2}) has infinitely many solutions and the form
of these solutions depend on $D.$

Therefore, we have proved:
\begin{proposition}
\label{PropC1}Let  $M=\mathbb{Q}\left(  \sqrt{-D}\right)$,  where $D=1,3$. If 
$c=1$ or $c=-1$, then non-equivalent generators of power integral basis of
$\mathcal{O=}\mathbb{Z}_{M}\left[  \xi\right]  $ over $\mathbb{Z}_{M}$ are
given by $\alpha=\xi,$ \ $2\xi-2\xi^{2}+\xi^{3}$ or $\alpha=\xi,$
\ $2\xi+2\xi^{2}+\xi^{3}$, respectively.
\end{proposition}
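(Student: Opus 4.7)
The plan is to specialize the reductions from Sections \ref{sec:3} and \ref{dio.311} to the exceptional parameter $c=1$, carry out the finite-case analysis that the general argument of Section \ref{sec:3 CMiBT} could not handle (since $c \in S_c$), and then obtain the $c=-1$ result by symmetry. Concretely, for $c=1$ the Thue equation (\ref{e5}) becomes (\ref{t1}) and, via (\ref{UVZ}), is equivalent to the system $V^2 - 3U^2 = -2\mu$ together with $U^2 + Z^2 = 2\mu$. The first equation has $k = c+1 = 2$, so Proposition \ref{l.1} applies and restricts $\mu$ to $\{1,-1,\omega,\omega^2\} \cap \mathbb{Q}(\sqrt{-D})$; the second equation has $k = c-1 = 0$, which is outside the hypothesis of Proposition \ref{l.1}, so an ad hoc factorization of the second Pellian equation is needed. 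That factorization is the main step.

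For $D = 1$ I would use $i \in \mathbb{Z}_M$ to factor the second Pellian equation as $(U - iZ)(U + iZ) = 2\mu$ with $\mu \in \{1,-1\}$. Since $2$ is associate to $(1+i)^2$ in $\mathbb{Z}[i]$, enumeration of the finitely many admissible factorizations forces $(U,Z) \in \{(\pm 1, \pm 1)\}$ if $\mu = 1$ and $(U,Z) \in \{(\pm i, \pm i)\}$ if $\mu = -1$. Inverting (\ref{UVZ}) then pins down $(p,q) \in \{(0,\pm 1),(\pm 1, 0),(0,\pm i),(\pm i,0)\}$ for $\mu = 1$, while for $\mu = -1$ the joint system has no integer solution.

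For $D = 3$ the direct factorization of $U^2 + Z^2$ in $\mathbb{Z}_M$ fails, so I would factor the Thue equation itself: (\ref{t1}) is equivalent to $X^2 + 3Y^2 = \mu$ via $X = \pm(p^2 - pq - q^2)$, $Y = \pm pq$, and in $\mathbb{Z}[\omega]$ this factors as $(X - \sqrt{-3}\,Y)(X + \sqrt{-3}\,Y) = \mu$. Since the two factors are complex conjugates whose product is a unit, both must themselves be units, which forces $Y = pq = 0$; combined with $p^4 = \mu$ or $q^4 = \mu$, this yields $(p,q) \in \{(0,\pm\varepsilon),(\pm\varepsilon,0)\}$ with $\varepsilon \in \{1,\omega,\omega^2\}$ according to $\mu$, and $\mu = -1$ contributes no solutions.

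To convert these Thue solutions into generators I would substitute each back into (\ref{e4}), rescaling when needed to clear denominators: the family with $q=0$, $p$ a unit gives $(x,y,z) = (p,0,0)$ directly, so $\alpha = \xi$ up to a unit, while the family with $p=0$, $q$ a unit lands in $\mathbb{Z}_M^3$ only after rescaling $(p,q) \mapsto (-2p,-2q)$, whence $(x,y,z) = (2,-2,1)$ and $\alpha = 2\xi - 2\xi^2 + \xi^3$. Finally for $c=-1$ I would invoke the symmetry recorded at the start of Section \ref{s.Sc}, namely that $(p,q) = (a,b)$ solves (\ref{e5}) at $c = c_0$ iff $(p,q) = (b,a)$ solves it at $c = -c_0$; this swaps the two generator families and yields $\alpha = \xi$ together with $\alpha = 2\xi + 2\xi^2 + \xi^3$ for $c = -1$. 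The main subtlety throughout is the rescaling step: it is precisely what creates the second, genuinely inequivalent generator, and one must verify that the resulting $\alpha$ is not merely unit-equivalent to $\xi$.
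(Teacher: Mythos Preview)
Your proposal is correct and follows essentially the same route as the paper: for $D=1$ you factor $U^{2}+Z^{2}=(U-iZ)(U+iZ)$ over $\mathbb{Z}[i]$, for $D=3$ you pass to the auxiliary form $X^{2}+3Y^{2}=\mu$ and factor over $\mathbb{Z}[\omega]$, and you recover $c=-1$ via the $(p,q)\mapsto(q,p)$ symmetry noted at the start of Section~\ref{s.Sc}. One small remark: in the $D=3$ step the factors $X\pm\sqrt{-3}\,Y$ are not literally complex conjugates (since $X,Y\in\mathbb{Z}[\omega]$ need not be real), but your conclusion still holds because their product is a unit in a domain, and then $|u_{2}-u_{1}|\le 2<|2\sqrt{-3}|$ forces $Y=0$.
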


\section{On elements with the absolute index $1$}

\label{s.AI}Let $\mathbb{Q}\subset M\subset K$ be number fields with
$m=[M:\mathbb{Q}]$ and $k=[K:M]$. Let $\mathcal{O}$ be either the ring of
integers $\mathbb{Z}_{K}\ $of $K$ or an order of $\mathbb{Z}_{K}$. Denote
$D_{\mathcal{O}}$ and $D_{M}$ the discriminant of $\mathcal{O}$\ and subfield
$M$, respectively. Also, denote by $\gamma^{(i)}$ the conjugates of any
$\gamma\in M$ ($i=1,\ldots,m$). Let $\delta^{(i,j)}$ be the images of
$\delta\in K$ under the automorphisms of $K$ leaving the conjugate field
$M^{(i)}$ elementwise fixed ($j=1,\ldots,k$).

According to \cite{GRS} for any primitive element $\alpha\in\mathcal{O}$ we
have%
\begin{equation}
I_{\mathcal{O}}(\alpha)=\left[  \mathcal{O}^{+}:\mathbb{Z}[\alpha]^{+}\right]
=\left[  \mathcal{O}^{+}:\mathbb{Z}_{M}[\alpha]^{+}\right]  \cdot\left[
\mathbb{Z}_{M}[\alpha]^{+}:\mathbb{Z}[\alpha]^{+}\right]  . \label{ind}%
\end{equation}
The first factor we call the \textit{relative index} of $\alpha$ and we have%
\[
I_{\mathcal{O}/M}(\alpha)=\left[  \mathcal{O}^{+}:\mathbb{Z}_{M}[\alpha
]^{+}\right]  =
\]%
\begin{equation}
=\frac{1}{\sqrt{|N_{M/\mathbb{Q}}(D_{\mathcal{O}/M})|}}\cdot\prod_{i=1}%
^{m}\;\;\prod_{1\leq j_{1}<j_{2}\leq k}\left\vert \alpha^{(i,j_{1})}%
-\alpha^{(i,j_{2})}\right\vert \label{relind}%
\end{equation}
where $D_{\mathcal{O}/M}$ is relative discriminant of $\mathcal{O}$ over $M$.
For the second factor we have
\[
J(\alpha)=\left[  \mathbb{Z}_{M}[\alpha]^{+}:\mathbb{Z}[\alpha]^{+}\right]  =
\]%
\begin{equation}
=\frac{1}{\sqrt{|D_{M}|}^{[K:M]}}\cdot\prod_{1\leq i_{1}<i_{2}\leq m}%
\;\;\prod_{j_{1}=1}^{k}\prod_{j_{2}=1}^{k}\left\vert \alpha^{(i_{1},j_{1}%
)}-\alpha^{(i_{2},j_{2})}\right\vert . \label{ind2}%
\end{equation}
Generators $\alpha_{0}$ of relative power integral bases of $\mathcal{O}$ over
$M$ have relative index $I_{\mathcal{O}/M}(\alpha_{0})=1$. The elements%
\begin{equation}
\alpha=A+\varepsilon\cdot\alpha_{0}, \label{eps}%
\end{equation}
(where $\varepsilon$ is a unit in $M$ and $A\in\mathbb{Z}_{M}$) have the same
relative index, and are called \textit{equivalent} with $\alpha_{0}$ over $M$.
Equivalently, all elements $\alpha\in\mathcal{O}$ generating a power integral
basis of $\mathcal{O}$ (over $\mathbb{Q}$), that is having $I_{\mathcal{O}%
}(\alpha)=1,$ must be of the form (\ref{eps}), where $\alpha_{0}$ has relative
index $I_{\mathcal{O}/M}(\alpha_{0})=1$. In order that $\alpha$ generates a
power integral basis of $\mathcal{O}$ we must also have $J(\alpha)=1$.
Therefore for each $\alpha_{0}\in\mathcal{O}$ with relative index
$I_{\mathcal{O}/M}(\alpha_{0})=1,$ we have to determine the unit
$\varepsilon\in M$ and $A\in\mathbb{Z}_{M}$ such that $J(\alpha)=1$.

We consider the octic field $K_{c}=\mathbb{Q}(\xi),$ where $\xi$ is a root of
the polynomial $f(t)=t^{4}-2ct^{3}+2t^{2}+2ct+1,$ where $c\in\mathbb{Z}%
_{M}\backslash\left\{  0,\pm2\right\}  $, $M=\mathbb{Q}(\sqrt{-D})\ $and $D$
is a squarefree positive integer. Therefore, $m=[M:\mathbb{Q}]=2$ and $K_{c}$
is an extension of $M$ of degree $k=[K_{c}:M]=4.$

We have proved that all generators of relative power integral bases of
$\mathcal{O=}\mathbb{Z}_{M}\left[  \xi\right]  $ over $M$ are given by
\[
\alpha_{1}=\xi,\;\;\;\alpha_{2}=2\xi-2c\xi^{2}+\xi^{3},
\]
in the cases given in Theorem \ref{tm:main}. Also, according to Remark
\ref{Rem1}, $\alpha_{1}\ $and $\alpha_{2}$ are the generators of relative
power integral bases for all $c\in\mathbb{Z}_{M}\backslash\left\{
0,\pm2\right\}  .$\newline

\begin{proof}
[Proof of Theorem \ref{tm:IG}] Taking $\alpha
_{0}=\alpha_{1},$ $\alpha_{2}$ we calculate $J(\alpha)$ with the $\alpha$ in
(\ref{eps}). For $-D\equiv2,3\;(\operatorname{mod}4)$ an integral basis of $M$
is given by $\{1,\omega\}$ with $\omega=\sqrt{-D}$. We have
\[
\sqrt{|D_{M}|}^{[K:M]}=16D^{2}.
\]
We set $c=p+q\omega$ with integer parameters $p,q$. Let $A=a+b\omega$ with
$a,b\in\mathbb{Z}$. Note that the product (\ref{ind2}) in $J(\alpha)$ does not
depend on $a$. We have $\varepsilon=\pm1$ and for $-D=-1$ we also have
$\varepsilon=\pm i$. The product
\begin{equation}
\prod_{j_{1}=1}^{4}\prod_{j_{2}=1}^{4}\left\vert \alpha^{(1,j_{1})}%
-\alpha^{(2,j_{2})}\right\vert \label{jjj}%
\end{equation}
is of degree $16$, depending on $D,p,q\ $and $b$. We calculated this product
by Maple using symmetric polynomials. The result is a very complicated
polynomial with integer coefficients of the above variables. We found that in
each case the above product was divisible by $4096D^{2}$. Therefore dividing
it by $16D^{2}$ the $J(\alpha)$ is divisible by $256$. This implies that we
cannot have $J(\alpha)=1$, therefore we cannot have $I_{\mathcal{O}}%
(\alpha)=1$.
\end{proof}
\\

\textbf{Computational aspects}

It was very difficult to perform the calculation of the product (\ref{jjj}).
We had to do it in several steps making simplifications by using symmetric
polynomials in each step. Even so, this calculation has reached the limits of
the capacities of Maple. We were not able to perform this calculation for
$-D\equiv1\;(\operatorname{mod}4)$.\medskip

\textbf{Acknowledgements.} The authors would like to thank Professor Andrej
Dujella for helpful suggestions. The idea for Theorem \ref{tm:IG} and the proof of it 
due to Professor Istv\'{a}n Ga\'{a}l.

{\footnotesize \textsc{B. Jadrijevi\' c, Department of Mathematics, University of Split, R. Bo\v skovi\' ca 33, 21000 Split, Croatia,}  \texttt{borka@pmfst.hr}\\

 \textsc{Z. Franu\v si\' c, Department of Mathematics, Faculty of Science, University of Zagreb,  Bijeni\v cka
30, 10000 Zagreb, Croatia,}  \texttt{fran@math.hr} }

{\footnotesize \emph{E-mail address}: \texttt{borka@pmfst.hr} }

\end{document}